\DeclareMathOperator{\RRR}{\mathbb{R}}
\DeclareMathOperator{\II}{\mathcal{I}}
\DeclareMathOperator{\JJ}{\mathcal{J}}
\DeclareMathOperator{\FF}{\mathcal{F}}
\DeclareMathOperator{\RR}{\mathcal{R}}
\DeclareMathOperator{\XX}{\mathcal{X}}
\DeclareMathOperator{\PPP}{\mathcal{P}}
\renewcommand*{\phi}{\varphi}
\DeclareMathOperator{\Mod}{Mod}
\DeclareMathOperator{\fp}{fp}
\DeclareMathOperator{\Img}{Im}
\DeclareMathOperator{\Ker}{Ker}
\DeclareMathOperator{\id}{id}
\DeclareMathOperator{\Hom}{Hom}
\newcommand{\frincipal}[1]{#1 ^+}
\newcommand{\Lan}[1]{\mathrm{Lan}_{#1}}
\newcommand{\KerDir}[2]{\Ker {#1}^{\overrightarrow{#2}}}
\newcommand{\ImDir}[2]{\Img {#1}^{\overrightarrow{#2}}}
\theoremstyle{plain}
\newtheorem{theorem}{Theorem}
\newtheorem{prop}[theorem]{Proposition}
\newtheorem{lem}[theorem]{Lemma}
\theoremstyle{definition}
\newtheorem{defi}[theorem]{Definition}
\newtheorem{pbm}[theorem]{Problem}
\theoremstyle{remark}
\newtheorem{ex}[theorem]{Example}
\newtheorem{rk}[theorem]{Remark}
\Crefname{prop}{Proposition}{Propositions}
\Crefname{lem}{Lemma}{Lemmas}
\Crefname{cor}{Corollary}{Corollaries}
\Crefname{defi}{Definition}{Definitions}
\Crefname{ex}{Example}{Examples}
\Crefname{rk}{Remark}{Remarks}
\Crefname{pbm}{Problem}{Problems}
\newcommand{\field}{\mathbf{k}}
\newcommand{\Vect}{\mathsf{Vect}_\field}
\DeclareMathOperator{\EE}{\mathcal{E}}
\colorlet{colorsteve}{blue}
\colorlet{colorstevetodo}{red}
\colorlet{colorvadim}{magenta}
\colorlet{colorjp}{orange}
\newcommand{\cplx}[1][M]{K_C(#1)}
\title[Block-decomposability of multiparameter persistence modules]{Local characterization of block-decomposability for multiparameter persistence modules}
\author{Vadim Lebovici}
\email{lebovici@math.univ-paris13.fr}
\address{LAGA, Université Sorbonne Paris Nord}
\author{Jan-Paul Lerch}
\email{lerch@math.uni-bielefeld.de}
\address{Faculty of Mathematics, Bielefeld University}
\author{Steve Oudot}
\email{steve.oudot@inria.fr}
\address{Inria Saclay and École polytechnique}
\begin{document}
\begin{abstract}
  Local conditions for the direct summands of a persistence module to belong to a certain class of indecomposables have been proposed in the 2-parameter setting, notably for the class of indecomposables called block modules, which plays a prominent role in levelset persistence. Here we generalize the local condition for decomposability into block modules to the $n$-parameter setting, and prove a corresponding structure theorem. Our result holds in the generality of pointwise finite-dimensional modules over finite products of arbitrary totally ordered sets. Our
  proof extends the one by Botnan and Crawley-Boevey from $2$ to $n$ parameters, which requires some crucial adaptations at places where their proof is fundamentally tied to the 2-parameter setting.
\end{abstract}
\maketitle

\section{Introduction}\label{sec:introduction}
\subsection{Context}
It has been known since the beginning of the development of multiparameter persistence  that there is no complete discrete descriptor for multiparameter persistence modules~\cite{Carlsson2009}. As a consequence, much of the effort has been put in the design of incomplete discrete descriptors that can capture as much of the modules' structure as possible.  A fruitful idea in this direction has been to use homological approximations, in a nutshell: given a persistence module~$M\colon \RRR^n\to\Vect$, take a resolution $P_\bullet$ of $M$ by projectives, then build a descriptor from the supports of the direct summands of the $P_i$'s.
When $M$ is finitely presentable~($\fp$), the $P_i$'s can be taken to be free modules, and Hilbert's Syzygy theorem ensures that $M$ admits finite such resolutions, meaning that there are only finitely many non-zero $P_i$'s in the exact sequence, and that each one of these terms decomposes as a finite direct sum of interval modules supported on principal upsets, that is:
\[ P_i\simeq \bigoplus_{x \in \xi_i} \field_{x^+}, \]
where $\xi_i$ is a multiset of points in $\RRR^n$, called the $i$-th {\em multigraded Betti number} of~$M$, and where $x^+$ denotes the principal upset  $\{y\in\RRR^n \mid y \geq x \}$. These intervals can then be gathered into a descriptor called the {\em signed barcode} of~$M$, whose positive part is composed of those upsets coming from even degrees in the resolution, and whose negative part  is composed of the upsets coming from odd degrees, with multiplicity. Uniqueness of this barcode is guaranteed insofar as minimal free resolutions are considered, since in this case the $P_i$'s are unique up to isomorphism. Signed barcodes coming from minimal resolutions were introduced in~\cite{botnan2021signed} as a generalization of the usual (unsigned) barcodes from 1-parameter persistence, and they were shown to be amenable to interpretation in similar ways. They were then proven in~\cite{oudot2023stability} to be also stable under a signed version of the usual bottleneck distance between barcodes.  

The problem with free resolutions is that they offer little variety as to the shapes of the intervals involved in the signed barcodes, which have to be principal upsets. This is why the use of larger collections of intervals has since been proposed, including: the whole collection of upsets~\cite{miller2020homological}, the {\em hooks} or the {\em rectangles}~\cite{botnan2021signed}, the collection of {\em single-source intervals}~\cite{blanchette2021homological}, or even the whole collection of intervals~\cite{asashiba2023approximation}. The general framework was set up in~\cite{blanchette2021homological} using relative homological algebra. Given a collection~$\II$ of intervals containing the principal upsets, the idea is to declare the interval modules supported on elements in~$\II$ as our new class~$\XX$ of indecomposable projectives. This amounts to restricting the focus to those short exact sequences on which the $\Hom$-functor $\Hom(X,-)$ is exact for every $X\in\XX$. Under some conditions, the collection~$\EE_{\XX}$ of such sequences defines the structure of an exact category on persistence modules, which implies that standard tools from homological algebra can be used. In particular, if a module~$M$ admits finite resolutions by projectives relative to~$\EE_{\XX}$ (also called {\em $\XX$-resolutions}), then the minimal such resolutions give rise to a unique signed barcode, in which each {\em bar} is the support of some element in~$\XX$  and therefore an interval from the collection~$\II$.

A central follow-up question is how to compute finite $\XX$-resolutions, or more precisely the supports of the summands appearing in their various non-zero terms. To our knowledge this is a widely open question, with some interesting recent contributions using Koszul complexes~\cite{asashiba2023relative,chacholski2022effective} but still a long way to go towards scalable methods. In this context, we turn to the following decision version of the problem:
\begin{pbm}\label{pbm:decision_resolution}
  Given a fixed collection~$\II$ of intervals in $\RRR^n$, its corresponding class~$\XX$ of indecomposable projectives, and a fixed natural number $k$, can we efficiently  decide whether or not a given $\fp$ persistence module~$M$ admits an $\XX$-resolution of length at most~$k$? Here~$M$ can be given either by some finite free presentation, or by some simplicial multifiltration.
\end{pbm}
In this paper we focus on the case $k=0$, which asks to decide whether or not $M$ is projective relative to~$\EE_{\XX}$, or equivalently, whether or not $M$ is {\em $\XX$-decomposable}, meaning that it decomposes as a direct sum of elements in~$\XX$. This problem has received some attention for~$n=2$, where it was addressed for $\II$ being either the collection of {\em rectangles} (i.e., products of 2 intervals of~$\RRR$) or the collection of {\em blocks} (i.e., rectangles that are either infinite horizontal or vertical bands, or codirected upsets or directed downsets):
\begin{theorem}[\cite{Botnan2018a,botnan2022rectangle,botnan2023local,Cochoy2020}]\label{thm:2d-block-rect-decomp}
A $\fp$ persistence module $M\colon \RRR^2 \to \Vect$ is block-decomposable (resp. rectangle-decomposable) if, and only if, its restriction to the commutative square $\{r, r'\}\times\{s,s'\}$ is block-decomposable (resp. rectangle-decom\-posable) for every $r<r'\in \RRR$ and $s<s'\in \RRR$.
The result still holds if $M$ is only pfd (pointwise finite-dimensional), or if $\RRR^2$ is replaced by the cartesian product of any two totally ordered sets. 
\end{theorem}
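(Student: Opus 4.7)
The forward implication is immediate: every block module restricted to a commutative square $\{r,r'\}\times\{s,s'\}$ is again a block module (under the obvious identification of the $2\times 2$ grid with a sub-poset of $\RRR^2$), and restriction commutes with direct sums, so the same holds for any block-decomposable $M$.

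For the converse, I would first reduce to the case of an indecomposable $M$. The category of pfd persistence modules over any poset is Krull--Remak--Schmidt: each pfd indecomposable has a local endomorphism ring by an Azumaya-type argument, so $M \cong \bigoplus_i M_i$ with each $M_i$ indecomposable. Applying Krull--Remak--Schmidt uniqueness to the restriction $M|_S = \bigoplus_i M_i|_S$ against the given block decomposition of $M|_S$ forces each indecomposable summand of every $M_i|_S$ to be a block module, so each $M_i|_S$ is itself block-decomposable. It therefore suffices to prove that any indecomposable pfd $M$ satisfying the local condition is a block module.

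For such an indecomposable $M$, I would then extract one-parameter information. Every horizontal or vertical slice of $M$ is a pfd module over a totally ordered set, hence interval-decomposable by Crawley--Boevey's theorem. The local $2\times 2$ condition governs how intervals in neighbouring slices are glued by the structure maps: for any $r<r'$ and $s<s'$, the restriction $M|_{\{r,r'\}\times\{s,s'\}}$ must be a direct sum of block modules on the $2\times 2$ grid, which excludes the two \textbf{L-shaped} supports and the two \textbf{off-diagonal singletons} and imposes rigid rank relations on the four structure maps of the square. From this I would argue (a) every stalk $M(x)$ is at most one-dimensional, so $M$ is determined up to iso by $\supp M$, and (b) $\supp M$ itself is a block (horizontal band, vertical band, codirected upset, or directed downset) by showing that any other support shape contains an offending square whose restriction fails to be block-decomposable.

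The main obstacle is step (b): passing from a finite, purely local constraint at every $2\times 2$ square to a global set-theoretic statement about $\supp M$. One must enumerate the ``non-block'' support shapes and, for each one, produce an explicit offending square---a delicate combinatorial exercise, since the forbidden shapes are varied and some of them only reveal themselves through judiciously chosen squares far from any obvious feature of the support. Furthermore, handling arbitrary totally ordered index sets beyond $\RRR$ demands replacing any naive inductive construction by a transfinite induction or Zorn-type argument. A secondary hurdle is step (a): a single square cannot rule out a stalk of dimension $\geq 2$, so one-dimensionality must be derived by combining the constraints from many squares with the global indecomposability of $M$.
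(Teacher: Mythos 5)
The forward direction and the Krull--Remak--Schmidt reduction to indecomposable $M$ are both fine, and the observation that the only block modules on a $2\times 2$ square are the $7$ intervals excluding the two L-shapes and the two off-diagonal corners is also correct. The gap is in what you call the ``secondary hurdle'', step (a), which is actually the whole theorem in disguise.

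The local hypothesis is a statement about restrictions $M|_S$ to squares $S$, each of which is allowed to have arbitrarily high-dimensional stalks (it just has to be a direct sum of block modules on $S$, possibly many of them with overlapping supports). Nothing about a single square, or any finite collection of squares, forbids $\dim M(x)\geq 2$ for an indecomposable $M$; the local conditions constrain the ranks of the four structure maps in each square, but do not bound the pointwise dimensions. You propose to derive $\dim M(x)\leq 1$ ``by combining the constraints from many squares with the global indecomposability of $M$''; but \emph{indecomposability} is exactly the kind of global, non-constructive hypothesis that does not interact cleanly with pointwise local constraints -- there is no finite procedure that produces an idempotent endomorphism from ``$\dim M(x)=2$ plus local block-decomposability''. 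This is precisely why the cited proofs (Botnan--Crawley-Boevey, Cochoy--Oudot) do not classify indecomposables. Instead they run the argument the other way around: from the middle-exactness reformulation of the local condition they \emph{construct} a single block submodule of $M$ by functorial means (intersections of kernel and image submodules when $M$ is not $2$-exact; restriction to a zigzag/claw poset, interval decomposition there, and Kan extension back when $M$ is $2$-exact), show it is a direct summand, then iterate via Zorn's lemma. Indecomposability of $M$ is never used as an input; $1$-dimensionality of stalks and the shape of the support fall out of the explicit construction of the summand, rather than being proved a priori. Your step (b) has the same flavour of difficulty -- e.g.\ showing that a support whose vertical slices are all downward-closed intervals must globally be a downset requires a lifting/extension argument across the whole axis, not a single offending square -- and over a general totally ordered $T$ with no cofinal sequence, the ``transfinite induction'' you mention would in practice be exactly the $\Ker$/$\Img$ submodule machinery of the actual proof. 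As written, the proposal stops at the point where the real work begins.
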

Results such as this one are called {\em local characterizations} of $\XX$-decomposability, because they decompose the problem of determining $\XX$-decomposability for~$M$ into multiple instances over  small posets. The benefit is twofold. First, for the mathematician, it makes it easier to show that the modules arising from certain constructions satisfy unique properties. For instance, the fact that the 2-parameter modules arising from levelset persistence on the real line are block-decomposable is a direct consequence of Theorem~\ref{thm:2d-block-rect-decomp} and the Mayer-Vietoris sequence~\cite{Botnan2018,CdM09,carlsson2019parametrized}. Second, for the computer scientist, local characterizations may eventually lead to efficient algorithms for checking $\XX$-decomposability and for decomposing $\XX$-decomposable modules, as it happens for instance in the case of rectangles~\cite{botnan2022rectangle}.

\subsection{Our contributions}

Here we generalize  Theorem~\ref{thm:2d-block-rect-decomp} to products of $n\geq 3$  totally ordered sets,
focusing on the case where $\II$ is the collection of $n$-dimensional blocks, defined inductively as follows:
\begin{defi}\label{def:block}
  An interval $I$ of a product of $n\geq 3$ totally ordered sets $T_1 \times \ldots \times T_n$ is an \emph{$n$-block} if it is the Cartesian product of either:
  \begin{itemize}
  \item non-empty upward-closed proper subsets of each axis \emph{(birth block)}, or
  \item non-empty downward-closed proper subsets in each axis \emph{(death block)}, or
  \item an $(n-1)$-block in the product of $n-1$ axes $T_{i_1}\times \ldots \times T_{i_{n-1}}$, $1\leq i_1<i_2<\cdots <i_{n-1}\leq n$, and the entire remaining axis $i_n\notin \{i_1, i_2, \cdots, i_{n-1}\}$ (\emph{induced block}). 
  \end{itemize}
The corresponding interval module supported on~$I$ (see \Cref{sec:background}) is called a {\em block module}, and a persistence module isomorphic to a direct sum of block modules is called \emph{block-decomposable}.
\end{defi}

Our main result generalizes Theorem~\ref{thm:2d-block-rect-decomp} as follows: 
\begin{theorem}\label{thm:blockDecomp}
A pfd persistence module $M \colon T_1\times \ldots \times T_n \to \Vect$  over the Cartesian product of $n$ totally ordered sets is block-decomposable if, and only if, its restrictions to all commutative cubes $\{t_1, t_1'\}\times \ldots \times \{t_n, t_n'\}$ with~$t_i<t_i'\in T_i$ are block-decomposable.  
\end{theorem}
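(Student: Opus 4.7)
The plan is to prove each implication separately.

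For the forward direction, restriction commutes with direct sums, so it suffices to check that the restriction of any $n$-block $B \subseteq T_1 \times \cdots \times T_n$ to a commutative cube $C = \prod_{i=1}^n \{t_i, t_i'\}$ is either empty or an $n$-block of $C$ (viewed as a product of two-element chains). I would verify this by induction on $n$ along Definition~\ref{def:block}: birth and death blocks restrict to birth and death blocks of $C$ (possibly empty, if some coordinate projection becomes empty), and induced blocks restrict to induced blocks built from the restriction of the underlying $(n-1)$-block (with a case split on whether the ``full axis'' of the original block is the one being constrained).

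For the converse, my strategy is to extend the Botnan--Crawley-Boevey argument for the 2-parameter case. First, I would apply the Botnan--Crawley-Boevey structure theorem for pfd modules over products of totally ordered sets to write $M \cong \bigoplus_\lambda N_\lambda$ with each $N_\lambda$ indecomposable. Since a pfd module over a finite cube $C$ is finite-dimensional, the Krull--Schmidt theorem applies, so each restriction $N_\lambda|_C$ is a direct summand of the block-decomposable module $M|_C$ and is itself block-decomposable. This reduces the theorem to the following statement $(\ast)_n$: every pfd indecomposable $N \colon T_1 \times \cdots \times T_n \to \Vect$ all of whose cube restrictions are block-decomposable is itself a block module.

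I would establish $(\ast)_n$ by induction on $n$, with base case $n = 2$ supplied by Theorem~\ref{thm:2d-block-rect-decomp}. For $n \geq 3$, fix such an indecomposable $N$ and slice along the last axis: for each $t \in T_n$, the slice $N_t \defeq N|_{T_1 \times \cdots \times T_{n-1} \times \{t\}}$ inherits the local hypothesis, since each of its $(n-1)$-cube restrictions is a degenerate face of an $n$-cube restriction of $N$, and block-decompositions restrict to block-decompositions on such faces (by the forward direction). Applying the inductive statement $(\ast)_{n-1}$ summand-wise to $N_t$, each slice decomposes into $(n-1)$-blocks. In parallel, applying Theorem~\ref{thm:2d-block-rect-decomp} to every 2-slice of $N$ that involves the last axis yields constraints on how the block summands of adjacent slices $N_t$ and $N_{t'}$ are linked via the structure maps of $N$.

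The main obstacle, where genuinely new ideas beyond the 2-parameter proof are required, is to combine this slice-wise data with the indecomposability of $N$ to conclude that $\supp(N)$ is a single $n$-block (birth, death, or induced), rather than a union or mixture of several block types. I expect the argument to proceed by tracking the block-type of each slice-summand along chains of compatible structure maps and exploiting the recursive definition of induced blocks to pin down the global shape. Once $\supp(N)$ is known to be an $n$-block $B$, a direct computation on well-chosen 2-subcubes of $B$ using the local hypothesis should force all structure maps of $N$ between points of $B$ to be isomorphisms, yielding $N \cong \field_B$.
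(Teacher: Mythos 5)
The forward direction and the Krull--Schmidt reduction to indecomposables are both correct, and the reduction to the claim $(\ast)_n$ (every pfd indecomposable with block-decomposable cube restrictions is a block module) is a legitimate reformulation.

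The gap is precisely where you say ``The main obstacle\ldots\ is to combine this slice-wise data.'' This is not a routine bookkeeping step: it is the mathematical core of the theorem, and your sketch does not actually supply it. Two concrete reasons it cannot be dispatched the way you suggest. First, slicing along $T_n$ and linking adjacent slices via $2$-cubes uses only pairwise information between axes, but \Cref{ex:2-mid_not_3-mid} exhibits an indecomposable module on a $3$-cube whose restrictions to all $2$-faces are block-decomposable and yet which is not a block module; so the constraints coming from $2$-slices alone are provably insufficient, and the $n$-cube restrictions must enter in an essential way that your plan does not make visible. Second, ``tracking the block-type of each slice-summand along chains of compatible structure maps'' presupposes that the block decompositions of adjacent slices $N_t$, $N_{t'}$ can be chosen so that the transition maps $N_t \to N_{t'}$ are block-diagonal; this is far from automatic (morphisms between direct sums of interval modules are not block-diagonal in general), and establishing such a compatible system of decompositions across all $t \in T_n$ is exactly where hard work is needed. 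The paper avoids this difficulty entirely by taking a different route: it recasts local block-decomposability as $k$-middle-exactness for all $2 \le k \le n$ (\Cref{thm:blockDecomp-exactness}), then iteratively splits off a single block summand from $M$ by a case analysis on where exactness of the Koszul complexes fails (\Cref{sec:not-exact,sec:properlyBoundedExactness,sec:2-exact}); the fully-exact case is handled not by slicing but by showing that $M$ is recovered as the left Kan extension of its restriction to a \emph{claw} (\Cref{lem:clawExtension}), a poset generalizing the $A_n$-type zig-zags of the $2$-parameter proof, and that this restriction is forced to be interval-decomposable (\Cref{lem:DecompRestr}). Your outline is therefore not a variant of the paper's proof but a genuinely different plan whose central step is left open and, as it stands, is likely to run into the obstruction above.
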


Since restriction commutes with pointwise finite direct sums, and block modules restrict to block modules on cubes, the ``only if'' part of the result is immediate. Our proof of Theorem~\ref{thm:blockDecomp} therefore focuses on the ``if'' part. It is phrased using an alternative algebraic formulation for the local block-decomposability of~$M$, based on the concept of {\em $n$-middle exactness}, which, in short, says that, in any commutative cube $C$ in the poset, the Koszul complex of $M|_C$ at the maximal vertex of~$C$ has trivial homology in all but extremal degrees; see \Cref{def:Koszul,def:mex} and \Cref{thm:blockDecomp-exactness}. This is a natural extension to the $n$-parameter setting of the concept of {\em 2-middle exactness} that is used as algebraic formulation for local block-decomposability in the 2-parameter setting.
In fact, $k$-middle exactness in each cube for all $2\leq k \leq n$ are required for $M$ to be locally block-decomposable in the $n$-parameter setting (see Examples~\ref{ex:2-mid_not_3-mid} and~\ref{ex:3-mid_not_2-mid}), and reciprocally, locally block-decomposable modules are $k$-middle exact for all $2\leq k \leq n$.

Our proof of the ``if'' part of \Cref{thm:blockDecomp}  extends the one in \cite{Botnan2018a} with crucial adaptations at places where the original proof is fundamentally tied to the 2-parameter setting. In \cite{Botnan2018a}, the authors proceed by case distinction between those middle-exact persistence modules that are {\em $2$-exact}, i.e., whose Koszul complexes have trivial homology in all degrees (including the extremal ones), and those middle-exact persistence modules that are not $2$-exact. Here, we divide our case analysis between middle-exact modules that are not $n$-exact (\Cref{sec:not-exact}), middle-exact modules that are $k$-exact only for $k$ greater than or equal to some $l \geq 3$ (\Cref{sec:properlyBoundedExactness}), and middle-exact modules that are $k$-exact for all $k$ (\Cref{sec:2-exact}). Our most significant adaptations reside in the proof of the third case, for which we consider the  special case where the poset $T_1\times\ldots \times T_n$ is an $n$-dimensional cube as an intermediate step.

We should point out that, in contrast to what happens when~$n=2$, blocks are no longer tied to levelset persistence when $n>2$, including when $n$~is even. Yet, the case of blocks still has an interest in its own right. First, in the $\fp$ setting over~$\RRR^n$: the only block modules that can occur are the ones supported on principal upsets, therefore  our result becomes a local characterization of the free modules. Second, in the $\fp$ setting over a finite $n$-dimensional grid: since every death block restricts to a death block over at least one cube, our result induces a local characterization of the modules that decompose into block summands whose supports are not death blocks. These form an intermediate collection of interval modules between the free modules and the hook-decomposable modules. Therefore, in terms of fineness as an invariant, the Euler characteristic of relative resolutions by non-death block modules (which collapses the resolutions by taking the alternating sum of their terms of all degrees, in the associated Grothendieck group) lies in-between the Euler characteristic of free resolutions (known to be equivalent to the dimension vector or Hilbert function) and the Euler characteristic of relative resolutions by hook-decomposable modules (known to be equivalent to the rank invariant).

\subsection{Related work}
To our knowledge there are two alternative approaches for solving Problem~\ref{pbm:decision_resolution} in the case $k=0$. The first one consists in decomposing the module~$M$ explicitly, then checking the summands one by one. Already when $n=2$, the best known algorithm~\cite{dey2022generalized} for decomposing~$M$ into its indecomposable summands has a time complexity bound in $O(N^{2\omega+1})$, where $N$ is the number of simplices in the bifiltration from which $M$ originates, and where $\omega\geq 2$  is the exponent for matrix multiplication. By contrast, the method derived from Theorem~\ref{thm:2d-block-rect-decomp} to check rectangle-decomposability on the same input runs in $O(N^{2+\omega})$~time~\cite{botnan2022rectangle}. The second approach consists in computing the multigraded Betti number in degree~$1$ relative to~$\EE_{\XX}$, and to check whether it is the empty set. In the special case where $n=2$ and $\XX$ is the usual class of projectives, this can be done in time $O(N^3)$~\cite{lesnick2022computing,kerber2021fast}. In more general cases, the method of~\cite{asashiba2023relative,chacholski2022effective} based on Koszul complexes can be used for this purpose, but it requires to compute the $\Hom$-spaces from all the elements of~$\XX$ to~$M$, a task that already takes at least $\Omega(N^5)$ time (and probably more) for the rectangles on a 2-dimensional $N\times N$~grid.  This approach though has the advantage of making it possible to solve Problem~\ref{pbm:decision_resolution} for larger values of~$k$, by computing the multigraded Betti number in degree~$k+1$ relative to~$\EE_{\XX}$. Still using Koszul complexes, it is possible to tackle \Cref{pbm:decision_resolution} directly by computing the length of the resolution (or at least some upper bound) without computing the resolution itself. This is the task undertaken in~\cite{chacholski2024realisationsposetstameness} in the case of the usual class of projectives, which opens some perspectives for potential generalizations to other classes including the block-decomposable modules.

%
%

\subsection*{Acknowledgement}
The authors would like to thank Fran\c cois Petit for initial discussions that sparked this project. The first named author was funded in part by EPSRC EP/R018472/1. For the purpose of Open Access, the author has applied a CC BY public copyright licence to any Author Accepted Manuscript (AAM) version arising from this submission. The second author acknowledges that he has been supported by the Alexander von Humboldt Foundation in the framework of an Alexander von Humboldt Professorship endowed by the German Federal Ministry of Education and Research and that he is currently funded by the Deutsche Forschungsgemeinschaft (DFG, German Research Foundation) – Project-ID 491392403 – TRR 358.
He would also like to thank his PhD advisor Henning Krause, as a version of this proof was one part of his PhD project, William Crawley--Boevey and Benedikt Fluhr.

\section{Background} \label{sec:background}
In this section, we formally introduce our notations, as well as the existing results on block-decomposable 2-parameter persistence modules.

\subsection{Persistence modules} Throughout the article, let  $(P,\leq)$ be a poset, denoted simply by $P$, and let $\field$ be a field. A \emph{persistence module} over $P$ is a collection of vector spaces $\{M(p)\}_{p\in P}$ together with linear maps $M(p\leq q) \colon M(p)\to M(q)$ for any comparable pair $p\leq q$ in $P$. In the language of category theory, a poset $P$ canonically defines a category with elements of $P$ as objects and comparable pairs $p\leq q$ in $P$ as morphisms between them. In this setting, a persistence module is a functor $M\colon P\rightarrow \Vect$ from $P$ to the category of $\field$-vector spaces. \emph{Morphisms} between two persistence modules $M$ and $N$ over $P$ are given by natural transformations between them, that is, by collections of linear maps $\phi_p \colon M(p) \to N(p)$ subjected to the commutativity relations $N(p\leq q) \circ \phi_p = \phi_q \circ M(p\leq q)$ for all $p\leq q$ in~$P$. Similarly, monomorphisms, epimorphisms and isomorphisms, as well as direct sums, kernels, cokernels and images, are defined pointwise at each $p \in P$. The persistence modules over~$P$ thus form an abelian category, which we denote by~$\Mod \field P$. If $M\in\Mod \field P$ and $m\in M(p)$ for some $p\in P$, we denote~$|m|=p$. Any poset morphism $f \colon P \to Q$ induces a functor called \emph{pullback} $f^*\colon \Mod \field Q \to \Mod \field P$ defined by $f^* M (-) = M (f(-))$.

A persistence module $N\neq 0$ over $P$ is called a \emph{summand} of $M$ if we have $M \cong N\oplus N'$ for some third persistence module $N'$ over $P$. If both $N$ and $N'$ are non-zero, then $M$ is called {\em decomposable}. Modules that are not decomposable are called {\em indecomposable}. It is a known fact that every module~$M$ that is {\em pointwise finite-dimensional} (pfd), meaning that $\dim M(p)<\infty$ for all $p\in P$, decomposes essentially uniquely as a (pointwise finite) direct sum of indecomposable summands~\cite{Botnan2018a}. The {\em support} of a persistence module $M$ is the set of points $p\in P$ such that $M(p)$ is non-zero. A \emph{submodule} $N$ of $M$ is a persistence module together with a monomorphism $N\hookrightarrow M$. 

\subsection{Kan extensions and restrictions}

A classical way to extend persistence modules from smaller posets to larger posets is via Kan extensions. We recall this notion in the specific case of persistence modules, as in~\cite{Botnan2018}. Let $Q$ be a full subposet of $P$ (i.e., a subset of~$P$ on which the order relation is the restriction of the one on~$P$), and let $M$ be a persistence module over $Q$. The \emph{left Kan extension} of $M$ along the inclusion $\iota:Q\hookrightarrow P$ is the persistence module $\Lan{\iota}(M)$ over $P$ defined pointwise by the colimit formula:
\begin{equation*}
\forall p\in P,\quad  \Lan{\iota}(M)(p) = \varinjlim M|_{\iota(Q)\cap p^-}, \quad \text{where } p^- = \{p'\in P\colon p' \leq p\}. 
\end{equation*} 
The morphisms $\Lan{\iota}(M)(p\leq p')$ are defined from the functoriality of colimits.
It is known that $\Lan{\iota}\colon \Mod \field Q\to \Mod \field P$ is left adjoint to the pullback by the inclusion~$\iota^*\colon \Mod \field P \to \Mod \field Q$ called {\em restriction functor}. This implies in particular that:
\begin{equation}\label{eq:Lan_restr_adj}
\Hom_{\Mod \field Q} \left( \iota^* M ,\iota^* M  \right) \cong  \Hom_{\Mod \field P} \left( \Lan {\iota} \left( \iota^* M \right) , M \right).
\end{equation}
This is even an isomorphism of $\field$-algebras, as the involved morphisms are natural. In the following we sometimes use the alternative notation $M|_Q$ for the restriction of~$M$ to~$Q$, especially when the inclusion map~$\iota$ is not explicit.

\subsection{Intervals and interval modules}
Recall that an \emph{interval} of $P$ is a subset $I \subseteq P$ which is \emph{convex} (i.e., if $x \leq y \leq z$ with~$x, z \in I$, then $y \in I$) and \emph{connected} (i.e., for every two points $x, y \in I$, there exists a finite zig-zag~$x = z_0 \leq z_1 \geq \ldots \leq z_{n-1} \geq z_n = y$ of relations with $z_i\in I$ for all $i=0,\ldots,n$). To each interval~$I\subset P$ corresponds a unique \emph{interval module} with support $I$, denoted by~$\field_I$ and defined as:
\begin{align*}
  &\field_I(p) = 
  \begin{cases}
    \field  & \mbox{if }p\in I,\\
    0       & \mbox{otherwise,}
  \end{cases}
  &\field_I(p\leq q) = 
  \begin{cases}
    \id_\field  & \mbox{if }p,q\in I,\\
    0       & \mbox{otherwise.}
  \end{cases}
\end{align*}
Interval modules are indecomposable because their endomorphism ring is isomorphic to~$\field$ hence local. A persistence module over $P$ that decomposes as a direct sum of interval modules is called \emph{interval-decomposable}.

\subsection{Block-decomposability and $2$-middle exactness}

Let $P=R\times S$ be a product of totally ordered sets endowed with the product order. An important class of intervals of $R \times S$ are the so-called {\em blocks}. A block is the product of either:\begin{itemize}
  \item two non-empty upward-closed proper subsets of each axis, in which case it is called a \emph{birth block}, or
\item two non-empty downward-closed proper subsets of each axis, in which case it is called a \emph{death block}, or
\item the product of an interval on one axis with the entire other axis, in which case it is called a {\em band}.
\end{itemize}
An interval module with support a block is called a \emph{block module}, and a persistence module over $R\times S$ is called \emph{block-decomposable} if it is interval-decomposable and the supports of its summands are blocks.

As mentioned in the introduction, there exists a block-decomposition result for persistence modules indexed over $R\times S$. The proof of this result relies on a local algebraic characterization of block-decomposability, called ``middle-exactness'', which we recall here. Let $M$ be a persistence module over $R \times S$. An \emph{axis-aligned commutative square} in $R\times S$, or simply a \emph{square}, is a subset $Q = \{r<r'\}\times\{s<s'\}\subseteq R \times S$. For the sake of notational clarity, we identify the poset $Q$ with the power set of $\{1,2\}$ endowed with inclusion:
\begin{equation*}
  \begin{tikzcd}
    \lbrace 2 \rbrace \rar \arrow[from=d] & \lbrace 1,2 \rbrace \arrow[from=d,] \\
    \emptyset \rar & \lbrace 1 \rbrace
  \end{tikzcd}
\end{equation*}
and we suggestively denote the restriction $M|_Q$ of $M$ to $Q$ by the following notation:
\begin{equation*}
  \begin{tikzcd} 
  M_{2} \rar["d_1^{12}"] \arrow[from=d, "d^2"] & M_{12} \arrow[from=d, "d_2^{12}"] \\
  M_{\emptyset} \rar["d^1"] & M_1
  \end{tikzcd}
\end{equation*}
The persistence module $M$ over $P$ is called \emph{$2$-middle exact} if, for every square $Q\subset R \times S$,  the complex~$K_Q(M)$ defined as
\begin{equation} \label{eq:2d-Koszul}
0\longrightarrow  M_\emptyset \xrightarrow{\left[ \begin{smallmatrix} d^1 \\ d^2\end{smallmatrix} \right]} M_1 \oplus M_2 \xrightarrow{ \left[\begin{smallmatrix} d_2^{12}&\ - d_1^{12} \end{smallmatrix} \right]} M_{12} \longrightarrow 0,
\end{equation}
 has trivial homology in degree~1, i.e. $H_1\left(K_Q(M)\right) = 0$, where the term $M_{12}$ is in degree~0.
 This is equivalent to asking that the sequence is exact in the middle, meaning that the image of the map $M_\emptyset \to M_1\oplus M_2$ is equal to the kernel of the map $M_1\oplus M_2 \to M_{12}$.
  The module~$M$ is called \emph{$2$-left exact} (resp. \emph{$2$-right exact}, \emph{$2$-exact}) if it is $2$-middle exact and in addition the complex~$K_Q(M)$ has trivial homology in degree~2 (resp. in degree~0, in both degrees) for every square $Q \subset R \times S$. When $M$ is $2$-right exact, the restriction~$M|_Q$ is called a {\em pushout square}, or simply a {\em pushout}, because it then forms the diagram of the colimit of the span $M_2\leftarrow M_\emptyset \rightarrow M_1$.

\begin{ex}
 The block module $\field_B$ for a block $B\subseteq R \times S$ is 2-middle exact. One can also easily check that it is 2-left exact if, and only if, $B$ is not a death block, and that it is 2-right exact if, and only if, $B$ is not a birth block.
\end{ex}

It can be checked that a persistence module $M$ over a square $Q\subset R\times S$ is block-decomposable if, and only if, it is 2-middle exact. As a consequence, \Cref{thm:2d-block-rect-decomp} for block-decomposability is equivalent to the following local algebraic characterization:
\begin{theorem}[\cite{Botnan2018a,Cochoy2020}] 
\label{thm:BCB}
  Let $M$ be a pointwise finite-dimensional persistence module over~$P=R\times S$. Then, $M$ is block-decomposable if, and only if, it is 2-middle exact.
\end{theorem}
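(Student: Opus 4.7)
The ``only if'' direction is immediate from the preceding Example together with the fact that 2-middle exactness is preserved by pointwise-finite direct sums: for each square $Q$, the Koszul complex $K_Q(-)$ commutes with direct sums, and in the pfd setting so does its homology. The work therefore lies in the ``if'' direction.

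My plan is to reduce to the following key claim: every non-zero pfd 2-middle exact module $M$ over $R \times S$ admits a block module as a direct summand. Granted this, a Zorn-type argument applied to the collection of block-decomposable direct summands of $M$, combined with the Krull--Schmidt--Azumaya-type decomposition theorem for pfd modules from \cite{Botnan2018a}, produces a maximal block-decomposable summand $N$ with complement $M'$. Since direct summands inherit 2-middle exactness, applying the key claim to $M'$ would force $M' = 0$ by maximality, yielding $M \cong N$.

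To prove the key claim I would distinguish three cases according to how far $M$ is from being 2-exact. \emph{Case A (not 2-right exact):} there is a square $Q$ with a non-zero class in $H_0(K_Q(M))$, represented by some $v \in M_{12}$. Choose $v$ to be maximal in an appropriate sense and use middle-exactness to propagate it coherently upward, producing a family of representatives indexed by a birth block $B$ containing the top-right corner of $Q$; these assemble into an embedding $\field_B \hookrightarrow M$, and a retraction is built by selecting compatible linear complements along $B$. \emph{Case B (2-right exact but not 2-left exact):} dually, extract a death block summand from an element witnessing non-trivial $H_2$ of some square's Koszul complex. \emph{Case C (2-exact):} show that $M$ decomposes into band modules. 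The principal obstacle is Case C, since local 2-exactness offers no single square with a failure to exploit. The natural approach is to fix a horizontal slice $R \times \{s_0\}$, decompose $M|_{R \times \{s_0\}}$ into intervals via the 1-parameter pfd structure theorem, and extend each interval summand $\field_I$ to a band summand $\field_{I \times S}$ by iteratively lifting its basis along vertical transitions. The role of 2-exactness is precisely to guarantee that these lifts exist and remain compatible across every square, so that the collected extensions yield the desired band-decomposition.
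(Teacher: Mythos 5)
Note first that the paper does not prove this theorem — it cites it from \cite{Botnan2018a,Cochoy2020} — so the comparison is with the approach of \cite{Botnan2018a} as summarized in the paper's proof outline at the end of \Cref{sec:3d-middle-exactness} and generalized in Sections 5--7. Your overall structure (reduce to ``every non-zero 2-middle-exact pfd module has a block summand'' and split on 2-exactness) matches that strategy; the Zorn argument, however, is superfluous once you invoke the Botnan--Crawley-Boevey decomposition into indecomposables, since any indecomposable summand inherits 2-middle exactness and, if it has a block summand, must itself be a block.

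The genuine gap is Case C. A 2-exact pfd module over $R\times S$ decomposes into bands of \emph{both} kinds, $I\times S$ and $R\times J$, and both kinds really occur: $\field_{R\times J}$ is 2-exact for \emph{any} interval $J\subseteq S$. Restricting to a single horizontal slice $R\times\{s_0\}$ therefore cannot reconstruct $M$: horizontal bands $\field_{R\times J}$ with $s_0\notin J$ are invisible to the slice, while those with $s_0\in J$ contribute $\field_R$ to $M|_{R\times\{s_0\}}$, which your recipe would wrongly extend to $\field_{R\times S}$ rather than to the correct $\field_{R\times J}$. The paper's outline makes clear that the correct one-dimensional test poset is not a slice but a claw (\Cref{def:claw}), two rays going up from a common point $a$ — ``a poset of type $A_n$ with outgoing arrows from a central point'' — and even then the left Kan extension recovers $M$ only over a finite commutative cube (\Cref{lem:clawExtension}), the passage to the full poset requiring further kernel/image-submodule arguments. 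A secondary weak point is Case A: unlike the death-block case of \Cref{lem:injective-summand}, an embedding $\field_B\hookrightarrow M$ for a birth block $B$ does not automatically split, so the ``retraction'' you mention in passing is the entire difficulty; the standard move, dual to \Cref{lem:injective-summand}, is instead to produce and split an epimorphism $M\twoheadrightarrow\field_B$.
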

  
\section{Higher-dimensional middle-exactness and blocks}
\label{sec:3d-middle-exactness}

Throughout the article, let $n\geq 2$ be an integer and let $P = T_1\times \ldots \times T_n$ be the product of $n$ totally ordered sets, equipped with the product order.
For $1\leq k\leq n$, a \emph{$k$-slice of $P$} is a subset of $P$ defined by fixing exactly $n-k$ components. An \emph{axis-aligned commutative $k$-cube} in $P$, or simply a \emph{$k$-cube} is a subset $C = \{t_1, t'_1\}\times \ldots \times\{t_n, t'_n\}\subseteq P$ where~$t_i\leq t'_i$ are in $T_i$ and where exactly $k$ of these inequalities are strict. In particular, a $k$-cube is included in a $k$-slice but is not contained in any $(k-1)$-slice. For the sake of notational clarity, we identify the poset $C$ with the power set of $[k]=\{1,\ldots,k\}$ endowed with inclusion, and we write $\delta_{x}^A$ for the arrow $A\setminus\{x\}\to A$ when $A\setminus\{x\}\neq\emptyset$. For instance, when~$k=3$, we have:
\begin{equation}\label{eq:cube}
  \begin{tikzcd}[row sep=scriptsize, column sep=scriptsize] 
      & \lbrace 2,3 \rbrace \arrow[rr, "\delta_1^{123}"] \arrow[from=dd, "\delta_2^{23}" near start] & & \lbrace 1,2,3 \rbrace \\
      \lbrace 2 \rbrace \arrow[ur, "\delta_3^{23}"] \arrow[rr, crossing over, "\delta_1^{12}" near end] & & \lbrace 1,2 \rbrace \arrow[ur, "\delta_3^{123}"] \\
      & \lbrace 3 \rbrace \arrow[rr, "\delta_1^{13}" near start]  & & \lbrace 1,3 \rbrace \arrow[uu, "\delta_2^{123}"] \\
      \emptyset \arrow[ur, "\delta^3"] \arrow[rr, "\delta^1"] \arrow[uu, "\delta^2"] & & \lbrace 1 \rbrace \arrow[ur, "\delta_3^{13}"] \arrow[uu, crossing over, " \delta_2^{12}" near start]
    \end{tikzcd}
\end{equation}
We then suggestively denote the structure spaces and morphisms of the restriction~$M|_C$ of a persistence module~$M\colon  P \to \Vect$ to~$C$ by~$d_x^A\colon M_{A\setminus\{x\}}\to M_A$, and simply $d^i\colon M_\emptyset \to M_i$ instead of $d_i^{\{i\}}$. For instance, when~$k=3$, we denote:
\begin{equation}\label{eq:cubeRep}
  \begin{tikzcd}[row sep=scriptsize, column sep=scriptsize]
  & M_{ 23 } \arrow[rr, "d_1^{123}"] \arrow[from=dd, "d_2^{23}" near start] & & M_{ 123 } \\
  M_ 2  \arrow[ur, "d_3^{23}"] \arrow[rr, crossing over, "d_1^{12}" near end] & & M_{ 12 } \arrow[ur, "d_3^{123}"] \\
  & M_ 3  \arrow[rr, "d_1^{13}" near start]  & & M_{ 13 } \arrow[uu, "d_2^{123}"'] \\
  M_{\emptyset} \arrow[ur, "d^3"] \arrow[rr, "d^1"] \arrow[uu, "d^2"] & & M_{1} \arrow[ur, "d_3^{13}"] \arrow[uu, crossing over, " d_2^{12}" near start]
  \end{tikzcd}
\end{equation}

The notion of middle exactness is expressed using Koszul complexes \cite{Miller2005,Eisenbud1995,chacholski2022effective}. We do not recall the standard definition of Koszul complexes, but we provide an equivalent description of $\cplx$ in our setting. 
\begin{defi}\label{def:Koszul}
  Let $C$ be a $k$-cube of $P$ and $M \colon C\to \Vect$ be a persistence module over $C$. We define the \emph{Koszul complex} $\cplx$ by induction on $k$ as follows:
  \begin{enumerate}
    \item If $k=2$, then $\cplx$ is given by~\eqref{eq:2d-Koszul}. Moreover, one can readily check that any morphism of poset $\phi \colon C \to C'$ induces a morphism of chain complexes $\Phi \colon  K_{C}(M) \to K_{C'}(M)$.
    \item If $k\geq 3$, consider the opposite faces of $C = \PPP([k])$ defined by $\FF:=\PPP([k-1])$ and $\RR:=\PPP([k-1])* \lbrace k \rbrace$, where the latter denotes the set of all elements of $C$ which contain $k$.
    The Koszul complexes $K_{\FF}(M)$ and $K_{\RR}(M)$ of $M$ on these respective lower dimensional faces are defined by induction. Moreover, the morphism of poset $\phi \colon \FF\to\RR$ given by $S \mapsto S \cup \lbrace k \rbrace$ induces a morphism of chain complexes $\Phi \colon K_{\FF}(M)\rightarrow K_{\RR}(M)$. The complex~$\cplx$ is defined as the cone of this morphism $\cplx := \textrm{Cone}(\Phi)$.
  \end{enumerate}
\end{defi}
The complex $\cplx$ of our definition is the Koszul complex of $M|_C$ at the maximal element of $C$ following the convention of \cite[Def.~1.26, Ex.~1.27]{Miller2005}. This also shows that the above construction does not depend on the choices of opposite faces $\FF$ and $\RR$.

\begin{ex}\label{ex:3-Koszul}
  Let $C$ be a $3$-cube and $M$ be a persistence module over $C$. Recall the notations of~\eqref{eq:cubeRep}. The complex $\cplx$ is then given by:
  \begin{equation*} 
  \begin{tikzcd}[column sep = normal]
0 \arrow[r] &     M_\emptyset \arrow[r, "{\left[ \begin{smallmatrix}d^1 \\ d^2 \\ d^3 \end{smallmatrix} \right]}"] & M_1\oplus M_2 \oplus M_3 \arrow[r, "A"] & M_{12}\oplus M_{13}\oplus M_{23} \arrow[r, "{\left[ \begin{smallmatrix}  d_3^{123} \\ -d_2^{123} \\ d_1^{123} \end{smallmatrix} \right]^T}"] & M_{123} \arrow[r] & 0,
    \end{tikzcd}
  \end{equation*}
  where the term $M_{123}$ is in degree $0$ and where $A$ is the matrix:
  \begin{equation*}
    A=\left[ \begin{matrix} d_2^{12} & -d_1^{12} & 0 \\ d_3^{13} & 0 & -d_1^{13} \\ 0 & d_3^{23} & -d_2^{23} \end{matrix} \right].
  \end{equation*}
\end{ex}

We can now generalize the notion of middle-exactness to the $n$-parameter setting.

\begin{defi}\label{def:mex}
  A persistence module $M : P\to \Vect$ is \emph{$k$-middle exact} on a $k$-cube~$C$ if the complex $\cplx$ has trivial homology in all degrees $0 < i <k$. The persistence module $M$ is \emph{$k$-left-exact} (resp. \emph{$k$-right-exact}, resp. \emph{$k$-exact}) on $C$ if it is $k$-middle exact on $C$ and in addition the complex $\cplx$ has trivial homology in degree $k$ (resp. in degree 0, resp. in both degrees). A persistence module over~$P$ is \emph{$k$-middle exact} (resp. \emph{$k$-left exact}, \emph{$k$-right exact}, \emph{$k$-exact}) if it is such on every $k$-cube of $P$.
\end{defi}

The following lemma relates the different notions of exactness defined above.
\begin{lem} \label{lem:exactInduction}
  Let $M$ be a pfd persistence module over $P$. If $M$ is $k$-exact for some $2 \leq k \leq n$, then $M$ is $m$-exact for all $k \leq m \leq n$.
\end{lem}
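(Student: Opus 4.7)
The plan is to argue by induction on $m$, with base case $m=k$ given by hypothesis. So it suffices to show that $m$-exactness implies $(m+1)$-exactness for every $2 \leq m < n$. Fix an arbitrary $(m+1)$-cube $C \subseteq P$; I want to prove that $K_C(M)$ has trivial homology in every degree.

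By \Cref{def:Koszul}, writing $\FF = \PPP([m])$ and $\RR = \PPP([m])\ast\{m+1\}$ for the two opposite $m$-dimensional faces of $C$, the complex $K_C(M)$ is the mapping cone of the chain map $\Phi \colon K_\FF(M) \to K_\RR(M)$ induced by the inclusion $\FF \hookrightarrow \RR$. Since $\FF$ and $\RR$ are themselves $m$-cubes inside $P$, the $m$-exactness of $M$ (which holds by the inductive hypothesis) ensures that both $K_\FF(M)$ and $K_\RR(M)$ are acyclic in every degree.

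Now I invoke the standard long exact sequence associated to a mapping cone:
\begin{equation*}
\cdots \longrightarrow H_i(K_\FF(M)) \longrightarrow H_i(K_\RR(M)) \longrightarrow H_i(K_C(M)) \longrightarrow H_{i-1}(K_\FF(M)) \longrightarrow \cdots
\end{equation*}
Since the flanking terms $H_i(K_\FF(M))$ and $H_{i-1}(K_\FF(M))$ vanish for all $i$, and similarly for $K_\RR(M)$, one concludes $H_i(K_C(M)) = 0$ for every $i$, which is exactly $(m+1)$-exactness of $M$ on $C$. As $C$ was arbitrary, $M$ is $(m+1)$-exact, completing the induction.

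The argument is essentially formal once one trusts the cone description of $K_C(M)$; the only point requiring a little care is the bookkeeping of degrees in the long exact sequence and the verification that the cone construction in \Cref{def:Koszul} is indeed the usual mapping cone of chain complexes, so that the classical long exact sequence applies. No case distinction on the shape of the cube is needed, and pointwise finite dimensionality plays no role beyond ensuring that the Koszul complex and its homology are well-defined vector spaces.
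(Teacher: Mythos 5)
Your proof is correct and follows essentially the same route as the paper: induct on the dimension, exploit the fact that the Koszul complex on an $(m+1)$-cube is the mapping cone of a chain map between the Koszul complexes on two opposite $m$-faces, and then read off $(m+1)$-exactness from the long exact sequence of the cone together with the acyclicity of the two faces (by the inductive hypothesis). The only cosmetic difference is the index shift ($m\to m+1$ rather than the paper's $m-1\to m$).
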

\begin{proof}
  We proceed by induction on $m$. Suppose that $M$ is proven to be $(m-1)$-exact and take a $m$-cube $C$. The faces $\FF$ and $\RR$ of $C$ defined in \Cref{def:Koszul} are $(m-1)$-cubes. Since the Koszul complex $\cplx$ is defined as a mapping cone, we have a triangle in the homotopy category:
  \begin{equation*}
  K_{\FF}(M) \xrightarrow{\Phi} K_{\RR}(M) \rightarrow \cplx \rightarrow K_{\FF}(M) [1].
  \end{equation*}  
  This triangle induces a long exact sequence in homology:
  \begin{multline}\label{eq:les}
  \dots \rightarrow H_{i}\left( K_{\FF} (M)\right) \rightarrow  H_{i}\left( K_{\RR}(M) \right) \rightarrow  \\
  H_{i}\left( K_C(M) \right) \rightarrow H_{i-1}\left( K_{\FF} (M)\right) \rightarrow \dots
  \end{multline}
  and the result follows.
\end{proof}

\begin{rk}
  Assuming that $M$ is $(k-1)$-middle exact, a similar argument to the one used in \Cref{lem:exactInduction} ensures that checking $k$-middle exactness of $M$ can be done by checking exactness of $\cplx$ in degree $1$ and $k-1$ for every $k$-cube $C$. 
\end{rk}

\begin{rk}\label{rk:mexCondition}
  By definition of Koszul complexes, one has $H_{n-1} \left(\cplx\right)=0$ if, and only if, for all $m_i \in M_i$ such that
  \[
  d_i^{\lbrace i, j \rbrace } (m_i) = d_j^{\lbrace i, j \rbrace } (m_j)
  \]
  for all $\lbrace i, j \rbrace \in  C $, there is an element $m_{\emptyset} \in M_{\emptyset}$ with $m_i = d^{\lbrace i \rbrace}_{\emptyset}(m_{\emptyset})$.
  We refer to $m_\emptyset$ as a \emph{lift} of the $m_i$ and to the problem of finding such a lift as the \emph{$n$-lifting problem}.
\end{rk}

The next lemma ensures that block modules are examples of $k$-middle exactness for all $2\leq k \leq n$.
\begin{lem} \label{lem:blocksMex}
  A block module is $k$-middle exact for all $2\leq k \leq n$. 
\end{lem}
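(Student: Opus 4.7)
The plan is to analyze the restriction $\field_B|_C$ of the block module $\field_B$ to an arbitrary $k$-cube $C \subseteq P$, show that it is the indicator of a ``sub-cube'' of $C$, and identify its Koszul complex---up to a diagonal sign change---with the Koszul complex of the constant module $\field$ on a smaller cube, which is acyclic.

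To set things up, I write $C = \{t_1, t_1'\} \times \cdots \times \{t_n, t_n'\}$ and let $I \subseteq \{1,\dots,n\}$ be the set of $k$ indices for which $t_i < t_i'$ strictly, so that $C$ identifies with $\PPP(I)$. Writing $B = \prod_{i=1}^n B_i$, the intersection $B \cap C$ is either empty---in which case $\cplx[\field_B] = 0$ trivially---or of the form $\prod_{i \in I} S_i \times \prod_{i \notin I} \{t_i\}$ with each $S_i \subseteq \{t_i, t_i'\}$ non-empty. Setting $I_0 = \{i \in I : S_i = \{t_i\}\}$ and $I_1 = \{i \in I : S_i = \{t_i'\}\}$, one readily checks that $\field_B|_C$ is the interval module on $\PPP(I)$ whose support is the sub-interval $\{A \subseteq I : I_1 \subseteq A \subseteq I \setminus I_0\}$.

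The central observation I would make is that for a block, at most one of $I_0, I_1$ is non-empty. Indeed, each factor $B_i$ is either the full axis $T_i$ (and then $i \notin I_0 \cup I_1$), a proper upward-closed subset of $T_i$ (and then $i \notin I_0$, since upward-closedness forbids $t_i \in B_i$ while $t_i' \notin B_i$), or a proper downward-closed subset of $T_i$ (and then $i \notin I_1$). \Cref{def:block} forces all non-full-axis factors to be of the same closure-type, so $I_0$ and $I_1$ cannot simultaneously be non-empty. Assuming without loss of generality that $I_0 = \emptyset$ (the other case is dual) and setting $J := I \setminus I_1$, I would use the bijection $A \leftrightarrow A \setminus I_1$ between supported subsets of $I$ and arbitrary subsets of $J$ to identify the terms of $\cplx[\field_B]$ with those of the Koszul complex of the constant $\field$-module on the $|J|$-cube $\PPP(J)$. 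A direct inspection of \Cref{def:Koszul} shows that the two sets of differentials agree after rescaling basis vectors by $\pm 1$: the sign discrepancy between the position of an added index inside a subset of $I$ and inside the corresponding subset of $J$ depends only on the added index, so it is absorbed by a diagonal change of basis. Finally, I would recall that the Koszul complex of the constant module on a cube is, up to sign, the augmented simplicial chain complex of a simplex, hence acyclic when $|J| \geq 1$, and reduces to a single $\field$ in degree $0$ when $|J| = 0$ (the case $I_1 = I$). In either case the cohomology of $\cplx[\field_B]$ vanishes in all middle degrees $0 < i < k$, yielding the lemma.

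The main obstacle I anticipate is the bookkeeping in the identification of the two Koszul complexes---specifically, checking that the $\pm 1$ sign discrepancy coming from the differing positions of an added index inside ordered subsets of $I$ versus subsets of $J$ is indeed trivializable by a diagonal change of basis. Beyond that, the argument is purely combinatorial and reduces to the classical acyclicity of the constant-coefficient Koszul complex on a cube.
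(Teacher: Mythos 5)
Your proof is correct in substance and takes a genuinely different route from the paper's. The paper argues by induction on $n$, exploiting the mapping-cone structure of the Koszul complex from \Cref{def:Koszul}: for an induced block it chooses the opposite faces $\FF, \RR$ orthogonal to the fully-extended axis, making $\Phi$ an isomorphism and hence the cone acyclic; for a death block it finds a face with $M|_{\RR}=0$, reducing the homology computation to the lower-dimensional $K_\FF(M)$ and invoking the inductive hypothesis; birth blocks follow by duality. Your argument is instead a direct combinatorial one: it identifies $K_C(\field_B)$ with the Koszul complex of the constant module on a smaller cube $\PPP(J)$ (up to a diagonal sign change) and appeals to the classical acyclicity of the latter. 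Your approach is more explicit and avoids induction, at the cost of the sign bookkeeping you flag yourself; the paper's is slicker because acyclicity of a cone over an iso, and homology of a cone over a zero map, are both immediate.

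One point to tighten in your write-up: your trichotomy for the factors $B_i$ (full axis / proper upward-closed / proper downward-closed) is not exhaustive. Unwinding \Cref{def:block} down to $n=2$, an induced block can bottom out at a \emph{band}, where one factor $B_j$ is an arbitrary interval of $T_j$---neither upward- nor downward-closed---while all other factors are full. Your sentence ``\Cref{def:block} forces all non-full-axis factors to be of the same closure-type'' is therefore not literally true. Fortunately your conclusion that at most one of $I_0, I_1$ is non-empty survives: in the band case there is exactly one non-full axis, so $I_0 \cup I_1 \subseteq \{j\}$ is a singleton at most, and hence $I_0 = \emptyset$ or $I_1 = \emptyset$ trivially. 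You should state the band case separately rather than subsume it under the upward/downward dichotomy, since as written the justification has a (benign but real) gap. With that fix, the argument goes through.
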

\begin{proof}
  The proof proceeds by induction on $n$. The case $n=2$ is an easy exercise, which can also be seen as a consequence of \Cref{thm:BCB}. Suppose $M$ is a block module with support an induced block, which is $k$-middle exact for all $2\leq k\leq n-1$ by induction.
  Then the faces $\FF$ and $\RR$ in the definition of the Koszul complex $\cplx$ over an $n$-cube $C$ can be chosen orthogonal to the axis along which the induced block is extended. In that case, the Koszul complex is the mapping cone of an isomorphism of chain complexes $\Phi:K_{\FF}(M)\to K_{\RR}(M)$, hence has vanishing homology in every degree.
  If $M$ is a death block, then depending on the position of the $n$-cube $C$, we are either back in the previous situation or in a situation where we can choose $\RR$ so that $M_{|\RR}$ is zero. In the latter case, the homology of $\cplx$ coincides with the homology of $K_{\FF}(M)$, which is the Koszul complex of a lower dimensional death block module $M_{|\FF}$, which is middle exact by induction. The result for birth blocks follows by duality.
\end{proof}

Since $k$-middle exactness is closed under taking pfd direct sums of persistence modules, any pfd block-decomposable module is $k$-middle exact for all $2\leq k\leq n$. Our main result states that the converse is also true:
\begin{theorem}\label{thm:blockDecomp-exactness}
  A pfd persistence module $M$ over $P$ is block-decomposable if, and only if, it is $k$-middle exact for all $2\leq k\leq n$.
\end{theorem}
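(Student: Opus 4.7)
The ``only if'' direction is immediate from \Cref{lem:blocksMex} together with the fact that $k$-middle exactness is preserved under pfd direct sums, since $K_C(-)$ is additive and homology commutes with direct sums. For the ``if'' direction, the plan is to proceed by induction on $n$, with the base case $n=2$ being precisely \Cref{thm:BCB}. Throughout, the goal will be to iteratively extract a block summand from~$M$, show that the complement still satisfies the hypotheses, and invoke a Zorn-type argument (using the essential uniqueness of pfd decompositions) to conclude. Direct summands of~$M$ automatically inherit $k$-middle exactness for every~$k$, which keeps the summand extraction well-founded.

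For the inductive step, I would split the argument as suggested in the introduction. Let $\ell \in \{2,\dots,n+1\}$ be the smallest integer such that~$M$ is $m$-exact for every $\ell\leq m\leq n$, with $\ell=n+1$ meaning that~$M$ is not $n$-exact. In \emph{Case 1} ($\ell=n+1$), failure of exactness at degree $0$ or~$n$ of $K_C(M)$ on some $n$-cube~$C$ can be propagated through~$P$ using $n$-middle exactness and the lifting description of \Cref{rk:mexCondition}, yielding a birth block summand (from degree~$0$) or a death block summand (from degree~$n$). In \emph{Case 2} ($3\leq\ell\leq n$), exactness fails at level $\ell-1$ on some $(\ell-1)$-cube $C$ lying in an $(\ell-1)$-slice~$S$ of~$P$; the induction hypothesis applied to modules over~$S$ (which has strictly fewer than $n$ factors) produces a block summand on~$S$, which the higher exactness of~$M$ for $m\geq\ell$ then lets us extend along the transverse axes into an induced block summand of~$M$.

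The main obstacle is \emph{Case 3} ($\ell=2$), where~$M$ is $k$-exact for every $k\in\{2,\dots,n\}$. My plan, following the paper's outline, would be first to handle the special case where $P$ itself is an $n$-dimensional cube, and then reduce the general case to this one. On a cube, full exactness provides enough rigidity (all square faces being simultaneously pullbacks and pushouts) to split off block summands by coherent basis choices. To globalise, I would use $2$-exactness to ensure that restrictions to cubes form pushouts, and then rely on the Kan-extension adjunction~\eqref{eq:Lan_restr_adj} to glue local cube-level decompositions into a global one. The hard part, relative to the two-parameter case of~\cite{Botnan2018a}, lies in the cube subcase: an induced block in dimension $n\geq 3$ can have several ``free'' directions, so locally chosen generators will not in general assemble into a single block module; consistent choices across all sub-cubes are required, and it is precisely the higher-order exactness --- rather than just $2$-middle exactness --- that makes such coherent choices possible.
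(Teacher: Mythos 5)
Your high-level organization matches the paper's: the same three-way case split by the smallest degree of exactness, the reduction of the middle case to the extreme one via slices, and the special treatment of the full cube in the $2$-exact case. Where you depart most substantively, and where there is a genuine gap, is in the globalisation step of Case~3. You propose to ``glue local cube-level decompositions into a global one'' via the Kan-extension adjunction. This is not what the paper does, and it is not clear it can be made to work: decompositions of $M|_C$ over overlapping cubes $C$ are only defined up to non-canonical isomorphism, and the summands do not form a sheaf — summands may appear or disappear as the cube moves, and 2-exactness alone does not provide a way to match generators across cubes coherently. The paper avoids gluing entirely. It globalises by analysing the submodules $\KerDir{M}{i}$ and $\ImDir{M}{i}$: \Cref{lem:2-exact-1} handles the case $\KerDir{M}{i}\cap\ImDir{M}{i}\neq 0$ by passing to a $2$-slice, invoking the base case there, and lifting the resulting death block out along the remaining axes via \Cref{lem:extension} and $2$-left exactness; \Cref{lem:2-exact-2} handles $\KerDir{M}{i}\cap\ImDir{M}{i}=0$ for all $i$ by studying the intersections of the $\ImDir{M}{i}$ and reducing to lower dimension. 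The cube result (\Cref{pro:discrete2ex}) is used there only to determine \emph{which} intersections can be non-zero, not to supply a decomposition to be glued.

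Two smaller points. In the cube case, ``split off block summands by coherent basis choices'' glosses over the key technical device: the restriction to the \emph{claw} at $\min(C)$ (\Cref{def:claw}), together with the facts that $M'\cong\Lan{\iota}(\iota^*M')$ (\Cref{lem:clawExtension}, which uses $k$-right exactness, not just squares-are-pushouts) and that $\iota^*M'$ is interval-decomposable (\Cref{lem:DecompRestr}, which uses $2$-left exactness in an essential way and relies on the endomorphism-ring isomorphism~\eqref{eq:Lan_restr_adj_bis}). The latter is genuinely non-trivial since the claw poset does not have finite representation type for $n\geq 3$. In Case~1, your sentence ``can be propagated through~$P$ using $n$-middle exactness and the lifting description'' compresses what in the paper is a multi-step argument (kernel submodules $N=\bigcap_i\KerDir{M}{i}$, surjectivity and $k$-right exactness of $N$, the discretisation \Cref{lem:discretisation} reducing to a finite grid, and then \Cref{lem:finDeathBlock}) — the reduction to a finite poset is not optional here, since pfd does not imply that the kernel support has a maximal point in an arbitrary totally ordered product. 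Finally, your Case~2 invokes the main theorem by induction on $n$ to decompose $M|_S$, whereas the paper invokes only \Cref{prop:not-n-exact}; your version can be made to work (a block summand of $M|_S$ that witnesses failure of $(\ell-1)$-left exactness must be a death block), but you should say why the summand you extract is of the correct type (death, not induced), since only those can be pushed out along the transverse axes.
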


Note that \Cref{thm:blockDecomp-exactness} implies \Cref{thm:blockDecomp}, and vice-versa. Indeed, \Cref{thm:blockDecomp-exactness} ensures that a pfd module that is $k$-middle exact for all $2\leq k \leq n$ is block-decomposable, hence so are all its restrictions to cubes, as block modules restrict to block modules on cubes and as the restriction functor is additive. Reciprocally, a pfd persistence module whose restriction to every cube is block-decomposable is also $k$-middle exact for all $2\leq k \leq n$ by \Cref{lem:blocksMex} and the fact that $k$-middle exactness is closed under taking pfd direct sums.

Finally, note that $k$-middle exactness for all $2\leq k \leq n$ is required to characterize block-decomposable modules in \Cref{thm:blockDecomp-exactness}, as shown by the following two examples.
\begin{ex}\label{ex:2-mid_not_3-mid}
  Consider a $3$-cube $C$ and let $M$ be the persistence module over $C$ defined by:
  \begin{equation*}
    M := \quad
    \begin{tikzcd}[row sep=scriptsize, column sep=scriptsize]
    & 0 \arrow[rr, dashed] \arrow[from=dd, dashed] & & 0 \\
    \field \arrow[ur, dashed] \arrow[rr, dashed, crossing over] & & 0 \arrow[ur, dashed] \\
    & \field \arrow[rr, dashed]  & & 0 \arrow[uu, dashed] \\
    \field^2 \arrow[ur, "{\left[1 \ 1\right]}"] \arrow[rr, "{\left[0\ 1\right]}"] \arrow[uu, "{\left[1\ 0\right]}"] & & \field \arrow[ur, dashed] \arrow[uu, dashed, crossing over]
    \end{tikzcd}
  \end{equation*}
  The persistence module $M$ is indecomposable, as its endomorphism ring is isomorphic to $\field$. Obviously it is not a block module, and it is not $3$-middle exact either. Nevertheless, it is $2$-middle exact on each face of the cube.
\end{ex}

\begin{ex}\label{ex:3-mid_not_2-mid}
Consider again a $3$-cube $C$ identified with the power set of $\{1,2,3\}$ and consider the interval module $\field_I$ with support $I=\left\lbrace \lbrace 2 \rbrace, \lbrace 3 \rbrace, \lbrace 1,3 \rbrace, \lbrace 2,3 \rbrace \right\rbrace$:
\begin{equation*}
  \field_I := \quad
  \begin{tikzcd}[row sep=scriptsize, column sep=scriptsize]
  & \field \arrow[rr, dashed, "0"] \arrow[from=dd, "d_3^{23}" near start] & & 0 \\
  \field \arrow[ur, "d_3^{23}"] \arrow[rr, crossing over,dashed, "0" near end] & &0 \arrow[ur, dashed,"0"] \\
  & \field \arrow[rr, "d_1^{13}" near start]  & & \field \arrow[uu, dashed,"0"] \\
  0 \arrow[ur, dashed,"0"] \arrow[rr,dashed, "0"] \arrow[uu, dashed, "0"] & & 0 \arrow[ur,dashed, "0"] \arrow[uu, crossing over,dashed, "0" near start]
  \end{tikzcd}
\end{equation*} 
The interval $I$ is not a block and $\field_I$ is not $2$-middle exact, as the complex~\eqref{eq:2d-Koszul} associated to the front face of the cube is isomorphic to $0 \rightarrow 0 \rightarrow \field \rightarrow 0 \rightarrow 0$. Nevertheless, the interval module $\field_I$ is $3$-middle exact.
\end{ex}

\subsection*{Outline of the proof.}
This is an extension of the proof in \cite{Botnan2018a} with crucial adaptations at places where the original proof is tied to the 2-parameter setting. In \cite{Botnan2018a}, the authors proceed by case distinction between  those middle-exact persistence modules that are $2$-exact and those that are not. Here we divide the proof into three cases: first, modules that are middle-exact in all degrees but not (properly) exact in any degree (\Cref{sec:not-exact}); second, modules that are middle-exact in all degrees and (properly) exact only in degrees greater than some $l \geq 3$ (\Cref{sec:properlyBoundedExactness}); third, modules that are (properly) exact in every degree (\Cref{sec:2-exact}).

In Case 1, a reduction to the finite case is used (\Cref{lem:discretisation}), to split off birth or death blocks (\Cref{prop:not-n-exact}). This is different from the original proof in \cite{Botnan2018a}, where the problem could be reduced to an alternating poset of type $A_n$. The finite case is dealt with merely using the right-exactness property.

Case $2$ then uses Case $1$ to induce a block decomposition from a block decomposition of the restriction to a lower-dimensional slice (\Cref{pro:Case2}).

The most significant adaptations lie in the proof of Case $3$, for which we again consider the special case where the poset~$P$ is an $n$-cube as an intermediate step. In the $2$-parameter setting, the block-decomposition of a 2-exact persistence module is obtained by using the $1$-parameter interval-decomposition result on the restriction of the module to a well-chosen 1-dimensional subset---namely, a poset of type $A_n$ with outgoing arrows from a central point---then extending it back to the plane. The isomorphism between the original module and the extension of its restriction follows then from the very definition of left-exactness. This methodology simply fails for $n\geq 3$ parameters, as the restriction to a 1-dimensional subset does not contain enough information to recover the original module by extension, even under exactness assumptions.
To mitigate this, we study restrictions to a natural generalization of the 1-dimensional $A_n$-type posets from the 2-parameter setting, with the suggestive name \emph{claw} (\Cref{def:claw}), which have outgoing arrows along each axis from a central point. The main issue is that not all persistence modules indexed over such poset are interval-decomposable when $n\geq 3$. In fact, these posets do not even have finite representation type in general. 
Nonetheless, we show that restrictions of exact persistence modules 
to the claw are interval-decomposable, see \Cref{lem:DecompRestr}. Moreover, as in the 2-parameter setting, extending back these restrictions to the entire $n$-cube recovers the original module, therefore yielding a block decomposition result for $2$-exact persistence modules on $n$-cubes. The use of the finite case to prove the general case is structured in a very similar fashion to the $2$-parameter setting \cite{Botnan2018a}.

\section{Preliminaries}
In this section, we expose two lemmas which will be used repeatedly throughout the rest of the article.
First, we recall the following lemma from \cite{Botnan2018a}.
\begin{lem}[\textnormal{\cite[Lemma~2.1]{Botnan2018a}}]
  \label{lem:injective-summand}
  Let $M$ be a pfd persistence module over $P$. 
  If $M$ admits a persistence submodule~$\field_B\hookrightarrow M$ for some death block $B\subseteq P$, then this submodule is a direct summand. 
\end{lem}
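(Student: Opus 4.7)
The plan is to construct an explicit retraction $r \colon M \to \field_B$ of the given monomorphism $\iota \colon \field_B \hookrightarrow M$; the resulting splitting $M \cong \field_B \oplus \ker(r)$ then exhibits $\field_B$ as a direct summand. Write $B = D_1 \times \cdots \times D_n$ with each $D_i \subseteq T_i$ non-empty, proper, and downward-closed. Two features of $B$ drive the argument. First, $B$ is \emph{downward-closed} in the product order on $P$, which will remove mixed naturality obstructions. Second, $B$ is \emph{directed}: for any $x, y \in B$ the componentwise max $x \vee y$ lies in $B$, because each $T_i$ is totally ordered and each $D_i$ is downward-closed. Both properties rely crucially on the $T_i$ being totally ordered.

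The inclusion $\iota$ is equivalent to the data, for each $p \in B$, of a nonzero vector $m_p \in M(p)$ satisfying $M(p \leq q)(m_p) = m_q$ for every $p \leq q$ in $B$. Since $B$ is directed, form the filtered colimit $M_\infty \defeq \colim_{p \in B} M(p)$ in $\Vect$; the compatible family $(m_p)_{p \in B}$ assembles into an element $m_\infty \in M_\infty$. This element is nonzero, because for every $p \in B$ and every $q \geq p$ in $B$ the image of $m_p$ in $M(q)$ is $m_q \neq 0$, so $m_p$ survives in the colimit. Choose a linear functional $\lambda \colon M_\infty \to \field$ with $\lambda(m_\infty) = 1$, and define
\[
  r_p \defeq \lambda \circ (M(p) \to M_\infty) \quad \text{for } p \in B, \qquad r_p \defeq 0 \quad \text{for } p \notin B.
\]

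It remains to check that $r$ is a morphism of persistence modules and that $r \circ \iota = \id_{\field_B}$. For $p \leq q$ both in $B$, naturality is immediate from the universal cone of the colimit. For $p \leq q$ both outside $B$, both sides vanish. For $p \in B$ and $q \notin B$, both composites are zero because $\field_B(p \leq q) = 0$ and $r_q = 0$. The remaining case $p \notin B$, $q \in B$ never occurs, precisely because $B$ is downward-closed. The retraction property is the computation $r_p(m_p) = \lambda(m_\infty) = 1$ for every $p \in B$. Finally, $\ker(r)$ inherits pfd-ness as a submodule of $M$. There is no serious obstacle to the plan; the decisive insight is the simultaneous downward-closure and directedness of $B$, which allows a single filtered-colimit-plus-functional choice to assemble the retraction globally. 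A slightly more conceptual reformulation is that $\field_B$ is injective in persistence modules over $P$, since $\Hom(-, \field_B)$ factors as restriction to $B$, filtered colimit over $B$, and $\field$-linear dual, each of which is exact.
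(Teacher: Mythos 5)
The paper cites this lemma from Botnan--Crawley-Boevey without reproducing a proof, so there is no in-paper argument to compare against. Your proof is correct and self-contained: the decisive observations are that a death block $B = D_1 \times \cdots \times D_n$ is both downward-closed and directed in $P$ (the latter because each $T_i$ is totally ordered, so each $D_i$ is closed under binary max), and that a retraction of $\iota$ can be assembled from a single linear functional on the filtered colimit $\colim_{p\in B}M(p)$. This is essentially the same idea as in the cited reference, where the result is stated for interval modules supported on directed ideals of an arbitrary poset; you re-derive directedness from the product structure specific to blocks. Two remarks worth flagging. First, your conceptual reformulation---that $\Hom(-,\field_B)$ factors as restriction to $B$, filtered colimit over $B$, and $\field$-dual, all exact, so $\field_B$ is injective---is correct and clean. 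Second, your argument never actually uses the pointwise finite-dimensionality of $M$: directedness of $B$ plus the colimit functional suffice, so the conclusion holds for arbitrary persistence modules over $P$, not just pfd ones. (A small imprecision: the monomorphism $\iota$ also forces $M(p\leq q)(m_p)=0$ whenever $p\in B$, $q\notin B$, $p\leq q$, so the data $(m_p)_{p\in B}$ with internal compatibility is necessary but not literally \emph{equivalent} to $\iota$; this does not affect your proof, which only uses the forward extraction of the $m_p$.)
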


Then, we provide an adaptation of \cite[Lemma~5.4]{Botnan2018a} to the $n$-parameter setting. 

\begin{lem}\label{lem:extension}
Let $M$ be a pfd persistence module over $P$ which is $k$-middle exact for all $2\leq k \leq n$. Let $a_j\in T_j$ for some $j\in\{1,\ldots,n\}$ and let $J_i \subset T_i$  for $i\ne j$ be either the full axis~$T_i$ or an interval admitting an upper bound in $T_i\setminus J_i$.
Then, a monomorphism 
\[
  h:\field_{J_1\times \ldots  \times J_{j-1}\times \{a_j\} \times J_{j+1}\times \ldots \times J_n} \hookrightarrow M_{|T_1\times \ldots \times T_{j-1}\times \{a_j\} \times T_{j+1}\times \ldots \times T_n}.
\]
lifts to a monomorphism
\[
  h':\field_{J_1\times \ldots \times J_{j-1}\times a_j^- \times J_{j+1}\times \ldots \times J_n} \hookrightarrow M_{|T_1\times \ldots \times T_{j-1}\times a_j^- \times T_{j+1}\times \ldots \times T_n},
\]
where $a_j^- = \{t\in T_j \colon t \leq a_j\}$.
\end{lem}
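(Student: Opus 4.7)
The plan is to extend $h$ pointwise using middle exactness, then glue the pointwise choices via Zorn's lemma. Write $I := J_1 \times \ldots \times \{a_j\} \times \ldots \times J_n$ and $I' := J_1 \times \ldots \times a_j^- \times \ldots \times J_n$ for the supports of the source and target interval modules, and let $\pi \colon (t_1, \ldots, t_n) \mapsto (t_1, \ldots, a_j, \ldots, t_n)$ denote the retraction onto the top slice. For each $p \in I'$, I would choose a nonzero $w_p \in M(p)$ (with $w_p = h(p)(1)$ when the $j$-th coordinate of $p$ equals $a_j$) such that $w_p$ maps to $h(\pi(p))(1)$ under the transition to $\pi(p)$, maps to $0$ in the directions exiting $I'$, and satisfies the coherence $M(p \le p')(w_p) = w_{p'}$ for $p \le p'$ in $I'$; the desired $h'$ is then defined by $h'(p)(1) := w_p$.

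For the pointwise existence of $w_p$ at a point $p = (t_1, \ldots, t_n)$ with $t_j < a_j$, I would invoke middle exactness on a well-chosen cube. For each $i \ne j$ with $J_i \ne T_i$, fix an upper bound $b_i \in T_i \setminus J_i$ of $J_i$, and consider the axis-aligned cube $C_p$ spanned by $\{t_i, b_i\}$ in those directions, $\{t_i\}$ in directions with $J_i = T_i$ (degenerate), and $\{t_j, a_j\}$ in direction $j$. By Remark~\ref{rk:mexCondition}, the tuple $(m_i)_i$ defined by $m_i = 0$ for $i \ne j$ and $m_j = h(\pi(p))(1)$ satisfies the pairwise compatibility condition on the 2-faces of $C_p$, because the image of $h(\pi(p))(1)$ at the corner $(\ldots, b_i, \ldots, a_j, \ldots)$ vanishes (as $h$ is zero at points outside $I$, since $b_i \notin J_i$). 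Middle exactness of $M$ on $C_p$ then yields the required $w_p$.

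The main obstacle is coherence of the pointwise choices, since $w_p$ is determined only up to the joint kernel of the cube maps, and a priori nothing forces different cube-based constructions to glue into a natural transformation. My plan is to invoke Zorn's lemma on the poset of partial lifts $(U, h'_U)$, where $U$ is a subposet of $T_1 \times \ldots \times a_j^- \times \ldots \times T_n$ containing the top slice and $h'_U \colon \field_{I' \cap U} \hookrightarrow M|_U$ is a monomorphism extending $h$, ordered by extension. A maximal element exists; should some point $p$ remain uncovered, one uses middle exactness on a refined cube that simultaneously involves $p$ and its finitely many already-defined neighbors to extract a $w_p$ compatible with all of them, contradicting maximality. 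The subcase where some $J_i = T_i$ (so no upper bound $b_i$ is available) will require a Mittag-Leffler argument: using the pfd hypothesis, the inverse system of valid lifts over an exhausting sequence of candidate bounds stabilizes, enabling extraction of a consistent choice.

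Finally, injectivity of $h'$ is immediate: each $w_p$ maps to the nonzero $h(\pi(p))(1)$ under the transition to $\pi(p)$, hence $w_p \ne 0$, so $h'(p) \colon \field \to M(p)$ is injective for every $p \in I'$.
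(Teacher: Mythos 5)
Your pointwise lift (middle exactness on the cube bounded by the $b_i$'s and $a_j$) and the injectivity observation at the end are both correct, but the Zorn gluing step is where the real difficulty sits, and your sketch does not close it. At a maximal partial lift $(U,h'_U)$ with $p \in I' \setminus U$, the subposet $U$ may contain infinitely many points comparable to $p$, since the axes $T_i$ are arbitrary totally ordered sets; so there is no single ``refined cube'' that simultaneously involves $p$ and ``its finitely many already-defined neighbors.'' A lift of the boundary data on one cube exists, but nothing forces it to agree with the infinitely many choices already made. Moreover, pfd is indispensable to this lemma (it fails otherwise), yet in your proposal pfd only appears in the side case $J_i=T_i$, which is not where the finiteness actually has to bite.

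The paper avoids pointwise choices entirely: it packages the space of admissible lift candidates into a persistence submodule and applies a structural result. For each choice of upper bounds $\varepsilon=(\varepsilon_i)_{i\ne j}$, it defines
\[
E^\varepsilon(p) \;=\; M(p\le\pi(p))^{-1}\left(\Img h_{\pi(p)}\right)\,\cap\,\Ker M\left(p\le\alpha_j^\varepsilon(p)\right),
\]
where $\alpha_j^\varepsilon(p)$ keeps the $j$-th coordinate of $p$ and replaces the others by $\varepsilon$. Middle exactness shows these spaces are nonzero (that is essentially your pointwise lift) and, via a hierarchical lifting argument along lower-dimensional cubes adjacent to the target, that the structure maps of $E^\varepsilon$ are surjective. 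Intersecting over all $\varepsilon$ and invoking pfd to show the intersection stabilizes pointwise yields a submodule $E$ of $M|_{P_j^-}$ with surjective structure maps, and \cite[Lemma~2.3]{Botnan2018a} then extracts the constant $\field$-summand that furnishes $h'$. That lemma is exactly the coherent-choice result your Zorn argument is trying to re-derive from scratch; without invoking it (or re-proving it), and without pfd entering at the gluing step, your argument as written does not go through.
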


\begin{proof}
  Assume that all $J_i$ are intervals admitting upper bounds in $T_i\setminus J_i$ for $i\ne j$, the other cases being proven similarly. Let $\varepsilon_i \in T_i \setminus J_i$ be upper bounds for $J_i$ for~$i\ne j$ and denote $\varepsilon = \left(\varepsilon_i\right)_{i \neq j}$. Let us denote: 
  \begin{align*}
    P_j &= T_1\times \ldots \times T_{j-1}\times \{a_j\} \times T_{j+1}\times \ldots \times T_n, \\
    P_j^- &=T_1\times \ldots \times T_{j-1}\times a_j^- \times T_{j+1}\times \ldots \times T_n,
  \end{align*}
and the canonical projection by $\pi : P_j^- \to P_j$.
We define a map $\alpha^{\varepsilon}_j : P \to P$ defined for each $p=(p_1,\ldots,p_n)\in P$ by $\alpha^{\varepsilon}_j(p) = (p'_1,\ldots, p'_n)$ where:
\[
  p_i' = 
  \begin{cases}
    p_j & \text{if } i = j, \\
    \varepsilon_i & \text{otherwise.} \\
  \end{cases}
\]
Then we define a persistence submodule $E^{\varepsilon}$ of $M_{|P_j^-}$ for any $p\in P_j^-$ by:
\[
  E^{\varepsilon}(p) := M(p\leq \pi(p))^{-1} \big( \Img h_{\pi(p)} \big) \cap \Ker M(p\leq \alpha_j^{\varepsilon}(p)).
\]
Clearly $E^{\varepsilon}(p) \neq 0 $ for all $p \in P_j$ since then $\pi(p)=p$.
Moreover, it is also non-zero for $p \in P_j^-\setminus P_j$. To prove this, one has to find a preimage $m_\emptyset$ of a non-zero element $m_1\in E^{\varepsilon}(\pi(p))$ along $M(p\leq \pi(p))$ that also lies in $\Ker M(p\leq \alpha_j^{\varepsilon}(p))$. This is a simple consequence of the $n$-middle exactness of $M$ applied to the $n$-cube whose minimum is $p$ and whose maximum is $\alpha_j^{\varepsilon}(\pi(p))$. In the 3-parameter setting, this looks like:
\[ \begin{tikzcd}[row sep=scriptsize, column sep=scriptsize]
  & 0 \arrow[rr, mapsto] \arrow[from=dd] & & 0 \\
  0 \arrow[ur, mapsto]\arrow[rr, crossing over, mapsto] & & 0 \arrow[ur, mapsto] \\
  & 0 \arrow[rr, mapsto]  & & 0 \arrow[uu, mapsto] \\
  \exists m_{\emptyset} \arrow[uu, dashed, mapsto] \arrow[ur, dashed, mapsto]  \arrow[rr, dashed, mapsto]  & & m_1 \arrow[ur, mapsto] \arrow[uu, crossing over, mapsto]
\end{tikzcd}\]

Moreover, the structure maps of $E^\varepsilon$ are epimorphisms. To see this, let $p\leq q \in P_j^- $ and $m \in E^\varepsilon(q)$. We want to find a preimage of $m$ under the map $E^\varepsilon(p) \to E^\varepsilon(q)$. If the cube $C$ with minimum $p$ and maximum~$q$ in~$P$ is a~$k$-cube, then we first lift~$m$ along all $l$-cubes which are adjacent to~$q$ for each $l < k$ in increasing order.
These $l$-lifting problems are solved differently depending on the $l$-cube. If the $l$-cube lies in a slice which contains the $T_j$-axis, then one can solve the $l$-lifting problem by first extending the $l$-cube to a $k$-cube by zero entries and then using the $k$-middle exactness of~$M$. If the cube lies in a slice which is orthogonal to the~$T_j$-axis, then one can solve the $l$-lifting problem by extending the $l$-cube to a $k$-cube with maximum~$\pi(q)$, with entries of the~$k$-cube being either zero if~$M(q\leq \pi(q))(m) = 0$ or the non-zero elements given by the monomorphism~$h$ if~$M(q\leq \pi(q))(m) \in \Img(h_{\pi(q)})\setminus 0$. Finally, we can use the $k$-middle exactness of~$M$ and the previously constructed lifts to find a lift of~$m$ at the minimum~$p$ of~$C$.

Now, for any $p\in P_j^-$ we define:
\[E(p) \coloneqq \bigcap_{\varepsilon} E^{\varepsilon}(p), \]
where the intersection is taken over all $\varepsilon = (\varepsilon_i)_{i\ne j}$ such that $\varepsilon_i > J_i$ for all $i\ne j$.
For any $p\in P_j^-$ there is an $\varepsilon$ such that $E(p) = E^{\varepsilon}(p)$ by pointwise finite dimensionality of $M$. Therefore, the vector spaces $E(p)$ assemble into a persistence submodule of $M_{|P_j^-}$ whose structure maps are epimorphisms.
Thus \cite[Lemma 2.3]{Botnan2018a} ensures that~$E$ has a direct summand $\field_{P_j^-} \hookrightarrow E$, which provides the lift $h'$ of the monomorphism $h$ after multiplication by the appropriate scalar. 
\end{proof}

\section{Case 1: $M$ is not $n$-exact.}\label{sec:not-exact}
In this section, we prove \Cref{thm:blockDecomp-exactness} in the particular case of persistence modules which are not $n$-exact.
\begin{prop}\label{prop:not-n-exact}
  Let $M$ be a pfd persistence module over $P$ which is $k$-middle exact for all $2\leq k\leq n$. If $M$ is not $n$-left exact, then it has a death block summand. Dually, if $M$ is not $n$-right exact, then it has a birth block summand.
\end{prop}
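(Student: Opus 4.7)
The argument handles the case where $M$ is not $n$-left exact; the case of failing $n$-right exactness is entirely dual, obtained by reversing the order on each $T_i$ so as to swap death and birth blocks while preserving $k$-middle exactness for every $k$. The plan is to construct a monomorphism $\field_B \hookrightarrow M$ for some death block $B$ and then apply \Cref{lem:injective-summand} to conclude that $\field_B$ is a direct summand of $M$.

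The seed comes from the failure of $n$-left exactness: by \Cref{def:Koszul,def:mex}, one obtains an $n$-cube $C = \{t_1, t_1'\} \times \cdots \times \{t_n, t_n'\}$ and a nonzero element $m \in M_p$, with $p = (t_1, \ldots, t_n)$, lying in the kernel of the leftmost differential of $K_C(M)$; equivalently, $m$ is killed by every structure map $M_p \to M_{v_i}$, where $v_i$ agrees with $p$ except for coordinate $i$, which equals $t_i'$. I would then reduce to a finite grid via the announced discretisation lemma, so that immediate successors exist in each axis and the first death point along each axis of the images generated by $m$ is an actual element of $T_i$. From this bookkeeping one extracts, for every $i$, a nonempty proper downward-closed subset $D_i \subseteq T_i$ (properness witnessed by a recorded death point), so that $B := D_1 \times \cdots \times D_n$ is a death block containing $p$.

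To realise $\field_B$ as a submodule of $M$, I would apply \Cref{lem:extension} iteratively, extending the support of the initial monomorphism $\field_{\{p\}} \hookrightarrow M|_{\{p\}}$ downward one axis at a time. At the $j$-th step, the surviving-axis intervals $J_i$ are chosen so that the hypotheses of \Cref{lem:extension} are met: each $J_i$ is either the full $T_i$ or an interval admitting an upper bound in its complement, the latter being available precisely because the death points recorded in the previous paragraph provide such upper bounds on the axes along which $m$ has already been shown to die. The assumption of $k$-middle exactness for all $2 \leq k \leq n$ is what enables the lift at each step, as is visible in the proof of \Cref{lem:extension}. After $n$ such extensions one obtains the desired monomorphism $\field_B \hookrightarrow M$, and \Cref{lem:injective-summand} upgrades this to a direct summand, which completes the argument.

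The main obstacle will be the bookkeeping in this iteration: one must verify at each step that the input to \Cref{lem:extension} is a genuine monomorphism into the prescribed slice, and that the death behaviour of $m$ along not-yet-treated axes survives the earlier extensions so as to furnish the required upper bounds in the complement. This is precisely why the reduction to a finite grid is indispensable, as in a dense totally ordered set the first point at which the image of $m$ vanishes along a given axis need not exist as an element of $T_i$, and why one requires $k$-middle exactness for every $2 \leq k \leq n$ rather than only for $k = n$.
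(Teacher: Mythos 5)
Your plan correctly identifies the general skeleton---extract a nonzero element $m\in M(p)$ witnessing the failure of $n$-left exactness, pass to a finite setting, produce a death block, and promote it to a summand via \Cref{lem:extension,lem:injective-summand}---but it omits the key structural ingredient that makes the middle steps possible. The discretisation lemma (\Cref{lem:discretisation}) requires the module being discretised to have \emph{surjective structure maps} and to be $k$-middle exact for all $2\leq k\leq n$; $M$ itself satisfies neither hypothesis in general, and the ``submodule generated by $m$'' that you allude to is an interval module that need not be middle exact (compare \Cref{ex:3-mid_not_2-mid}). The paper resolves this by passing to the kernel submodule $N = \bigcap_{i=1}^n \KerDir{M}{i}$, which still fails $n$-left exactness since any kernel element such as $m$ lands in $N(p)$, and it devotes \Cref{lem:Kersurjective,lem:KernelLifting} to proving that $N$ has surjective structure maps and is $k$-right exact for all $2\leq k\leq n$, precisely what the discretisation lemma demands. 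Your proposal never introduces $N$ and leaves unspecified to which module the discretisation applies, so the reduction to a finite grid does not go through as written.

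You also skip the finite-case argument entirely. In the paper, \Cref{lem:finDeathBlock} is where the death block is actually extracted: one picks a \emph{maximal} point $p'$ of $\supp(N|_{P'})$ (not the arbitrary $p$ you started from), solves the lifting problems to carry a nonzero element of $N(p')$ all the way down to $\min(P')$ using right-exactness, and only then recognises the resulting submodule as a death block. Your alternative---extend $\field_{\{p\}}\hookrightarrow M|_{\{p\}}$ downward directly via \Cref{lem:extension}---does not produce a genuine morphism of persistence modules at the intermediate stages, because along the axes where the image of $m$ has not yet been shown to vanish at the immediate successor, the naturality squares of the putative zero-extended map fail (the image of $m$ may survive past $a_i$ even though $m$ dies at $t_i'$). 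Fixing this is exactly what the passage to $N$, the discretisation, and the maximal-element argument in \Cref{lem:finDeathBlock} accomplish, so these cannot be elided.
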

First, we study the middle exactness properties of some kernel submodules introduced in \Cref{sec:study-kernel-submodules}. Then, we extract a death block summand of $M$ within these kernel submodules to prove the above proposition in \Cref{sec:proof-prop-not-n-exact}. 

\subsection{Kernel submodules}\label{sec:study-kernel-submodules}
Let~$M$ be a pfd persistence module over~$P$ which is~$k$-middle exact for all~$2\leq k \leq n$. 
For any $i\in\{1,\ldots,n\}$, we denote by~$\KerDir{M}{i}$ the persistence submodule of~$M$ consisting of all elements which are eventually mapped to zero by the structure maps of~$M$ along the~$T_i$-axis. Throughout the section, we denote~$N = \bigcap_{i=1}^n \KerDir M i$.
\begin{lem} \label{lem:Kersurjective}
The persistence module $N$ has surjective structure maps.
\end{lem}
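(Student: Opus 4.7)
The plan is to show, for every $x \leq y$ in $P$ and every $m \in N(y)$, the existence of a preimage $m' \in N(x)$ with $M(x \leq y)(m') = m$. I proceed by induction on the number $k$ of coordinates in which $x$ and $y$ differ strictly. The base case $k = 0$ is trivial, since $x = y$ and $m$ lifts to itself. For $k \geq 2$, assume without loss of generality that coordinate~$1$ is among the differing ones and factor $x \leq y$ through the intermediate point $r$ defined by $r_1 = x_1$ and $r_i = y_i$ for $i \geq 2$: then $r \leq y$ differ in only one coordinate (a $k = 1$ situation) while $x \leq r$ differ in $k-1$ coordinates (to which the induction hypothesis applies). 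Composing the two resulting lifts yields $m'$.

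The substance of the proof lies in the case $k = 1$. Assume $x_1 < y_1$ and $x_i = y_i$ for all $i > 1$, and fix $m \in N(y)$. By pointwise finite-dimensionality of $M$, simultaneously choose $\varepsilon_i \in T_i$ with $\varepsilon_i > y_i$ for each $i \in \{1, \ldots, n\}$ large enough that $M$ sends $m$ to zero at the point $y^{(i)}$ obtained from $y$ by replacing its $i$-th coordinate with $\varepsilon_i$. Form the $n$-cube $C$ with minimum $x$ and maximum $\tilde{y} := (y_1, \varepsilon_2, \ldots, \varepsilon_n)$; this is genuinely an $n$-cube since $x_1 < y_1$ and $x_i = y_i < \varepsilon_i$ for $i > 1$. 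Identifying $C$ with $\PPP([n])$ as in the paper, the vertex $\{1\}$ of $C$ coincides with $y$ (because $x_i = y_i$ for $i > 1$), while the vertex $\{j\}$ for $j > 1$ is the point $(x_1, \ldots, \varepsilon_j, \ldots, x_n)$.

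Set the data $m_{\{1\}} := m \in M_{\{1\}}$ and $m_{\{j\}} := 0 \in M_{\{j\}}$ for $j > 1$. The $2$-face compatibility required by \Cref{rk:mexCondition} reduces to $d_1^{\{1, j\}}(m) = M(y \leq y^{(j)})(m) = 0$ on each face $\{1, j\}$, which holds by the choice of $\varepsilon_j$, and to the trivial identity $0 = 0$ on each face $\{i, j\}$ with $i, j > 1$. Since $M$ is $n$-middle exact on $C$, \Cref{rk:mexCondition} yields $m' \in M_{\emptyset} = M(x)$ with $d^i(m') = m_{\{i\}}$ for every $i$. Thus $M(x \leq y)(m') = d^1(m') = m$, while $d^j(m') = 0$ for $j > 1$, so that $m'$ dies along every axis $j > 1$. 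Along axis~$1$, $m'$ maps to $m$ at $y$ and $m$ is annihilated when moved further to $y^{(1)}$; by composition, $m'$ too dies along axis~$1$. Hence $m' \in N(x)$, finishing the $k = 1$ case.

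The principal obstacle is that a single preimage is required to die along every axis simultaneously. Imposing each axial-death condition separately through $2$-middle exactness would yield one preimage per axis, with no reason for the different preimages to coincide. The full $n$-dimensional middle exactness, via the $n$-lifting formulation of \Cref{rk:mexCondition}, is precisely what allows all $n$ death conditions to be enforced on a single element at once; this is where the assumption that $M$ is $k$-middle exact for the entire range $2 \leq k \leq n$ plays its essential role.
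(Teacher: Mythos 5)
Your proof is correct and follows essentially the same approach as the paper's, which merely points to the cube-lifting argument in the proof of \Cref{lem:extension}: reduce to a single-axis move, then solve one $n$-lifting problem on a large enough $n$-cube reaching into the region where $m$ has already died along every axis. One small remark: invoking pointwise finite-dimensionality to choose the $\varepsilon_i$ is unnecessary, since the existence of each $\varepsilon_i$ follows directly from $m \in N(y) = \bigcap_i \KerDir{M}{i}(y)$.
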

\begin{proof}
  Every element can be lifted along the axes as in the proof of \Cref{lem:extension}.
\end{proof}

\begin{lem} \label{lem:KernelLifting}
  The persistence module $N$ is $k$-right exact for every $2\leq k\leq n$.
\end{lem}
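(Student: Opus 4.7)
The plan is to establish $k$-right-exactness of $N$ by verifying the vanishing of $H_i(K_C(N))$ for every $k$-cube $C$ in $P$ and every $0 \leq i < k$. I will proceed by induction on $k \in \{2, \ldots, n\}$, separating the degree $i = 0$ case, the degree $i = k - 1$ case, and the remaining intermediate degrees $0 < i < k - 1$.

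First, vanishing of $H_0(K_C(N))$ is immediate: the final differential $K_1(C, N) \to K_0(C, N) = N_{[k]}$ is a signed sum of structure maps $d_i^{[k]} \colon N_{[k] \setminus \{i\}} \to N_{[k]}$, each of which is surjective by \Cref{lem:Kersurjective}, so the combined map is surjective.

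The key case is the vanishing of $H_{k-1}(K_C(N))$, which I would prove directly via an extension-to-$n$-cube argument. Suppose $C$ is a $k$-cube using axes $I \subseteq [n]$ with $|I| = k$, and set $J = [n] \setminus I$. Let $(n_l)_{l \in I}$ be a cycle in $K_{k-1}(C, N)$, i.e., $d_l^{\{l,l'\}}(n_l) = d_{l'}^{\{l,l'\}}(n_{l'})$ for all $l, l' \in I$. Since each $n_l$ lies in $N \subseteq \bigcap_j \KerDir{M}{j}$ and the index set $I$ is finite, for every $j \in J$ I can choose $s_j \in T_j$ large enough to annihilate every $n_l$ at height $s_j$ along axis $j$. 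Let $C'$ be the $n$-cube containing $C$ obtained by growing along each axis $j \in J$ up to $s_j$, and define $z' \in K_{n-1}(C', M)$ by $z'_{\{l\}} = n_l$ for $l \in I$ and $z'_{\{l\}} = 0$ for $l \in J$. A routine case analysis shows $z'$ is a cocycle on $C'$, with the critical compatibility for mixed pairs $\{l, j\}$ ($l \in I$, $j \in J$) following from the choice of $s_j$. Applying the $n$-middle exactness of $M$ on $C'$ at degree $n - 1$ produces $m_\emptyset \in M_\emptyset$ with $d^l(m_\emptyset) = z'_{\{l\}}$ for every $l \in [n]$. This lift lies in $N$ by construction: for each $l \in I$, $d^l(m_\emptyset) = n_l$ is eventually killed along axis $l$, hence so is $m_\emptyset$; for each $l \in J$, $d^l(m_\emptyset) = 0$ at height $s_l$ directly witnesses $m_\emptyset \in \KerDir{M}{l}$.

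For the remaining degrees $0 < i < k - 1$ (which only arise when $k \geq 3$), I would use the long exact sequence in homology associated to the cone decomposition of \Cref{def:Koszul}: writing $K_C(N) = \mathrm{Cone}(\Phi \colon K_\FF(N) \to K_\RR(N))$ with $\FF, \RR$ opposite $(k-1)$-faces of $C$, the triangle yields the segment
\[ H_i(K_\FF(N)) \to H_i(K_\RR(N)) \to H_i(K_C(N)) \to H_{i-1}(K_\FF(N)) \to H_{i-1}(K_\RR(N)). \]
Under the inductive hypothesis that $N$ is $(k-1)$-right exact, all four outer terms vanish in the range $0 < i < k - 1$ (using both the middle-exactness and the $H_0$-vanishing parts of $(k-1)$-right-exactness for $\FF$ and $\RR$), sandwiching $H_i(K_C(N))$ between zeros. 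The base case $k = 2$ has no such intermediate degree, so it is handled entirely by the two arguments above.

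The main obstacle lies in the degree $k - 1$ case: one must set up the $n$-cube extension so that the zero-padded cycle really is a cocycle on $C'$. This relies on choosing $s_j$ large enough on each non-cube axis $j$ to simultaneously kill the finitely many $n_l$'s along axis $j$. Once this is in place, $M$'s $n$-middle exactness produces the lift $m_\emptyset$, and the requirements $d^l(m_\emptyset) = 0$ for $l \in J$ automatically force $m_\emptyset \in N$.
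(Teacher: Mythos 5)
Your proposal is correct and follows essentially the same route as the paper: degree $0$ vanishing from \Cref{lem:Kersurjective}, intermediate degrees handled inductively via the long exact sequence \eqref{eq:les}, and the top degree $k-1$ settled by embedding the $k$-cube into an $n$-cube of $P$ with the extra entries zeroed out (possible because elements of $N$ die eventually along each axis), then invoking the $n$-middle exactness of $M$ and observing that the resulting lift $m_\emptyset$ lies in $N$. You merely handle the cases $k=n$ and $k<n$ uniformly and spell out the cocycle check, which the paper leaves implicit.
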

\begin{proof}
  \Cref{lem:Kersurjective} ensures that for every $k$-cube $C\subseteq P$, we have $H_{0}(\cplx[N])=0$. Therefore, using the long exact sequence~\eqref{eq:les} and following \Cref{rk:mexCondition}, we only have to prove that for every $k$-cube $C\subseteq P$, we have $H_{k-1}(\cplx[N])=0$. By \Cref{rk:mexCondition}, this is the same as to find a solution for the $k$-lifting problem inside $N$. 

  For $k=n$, this follows immediately. We give an illustration of this fact in the case $n=3$. Let $C$ be a $3$-cube inside $P=T_1\times T_2\times T_3$. Since $M$ is $3$-middle exact, we can find an element $m_{\emptyset}$ that fits into the following diagram:
  \[ \begin{tikzcd}[row sep=scriptsize, column sep=scriptsize]
  & 0 \arrow[from=dd] & & \\
  m_2 \arrow[ur, mapsto]\arrow[rr, crossing over, mapsto] & & 0 \\
  & m_3 \arrow[rr, mapsto]  & & 0 \\
  \exists m_{\emptyset} \arrow[uu, dashed, mapsto] \arrow[ur, dashed, mapsto]  \arrow[rr, dashed, mapsto] & & m_1 \arrow[ur, mapsto] \arrow[uu, crossing over, mapsto]
  \end{tikzcd} \]
  where $m_1, m_2$ and $m_3$ are given and assumed to lie in $N$. Yet, this implies that $m_{\emptyset}$ also lies in $N$. 

  If $k<n$, then we can turn this $k$-lifting problem into an $n$-lifting problem by embedding the respective $k$-cube into an $n$-cube in $P$. This cube can be chosen big enough so that all other terms are zero, since the elements of $N$ of the original $k$-cube must be mapped to zero eventually along the remaining $n-k$ axes by definition of $N$. We illustrate this in the case $n=3$ and $k=2$. Consider a $2$-cube~$C$ in~$P$ and the~$2$-lifting problem:
  \begin{equation*}
    \begin{tikzcd}[row sep=scriptsize, column sep=scriptsize]
     \alpha \rar[mapsto]  & \beta \\
     & \gamma \uar[mapsto]
     \end{tikzcd}
  \end{equation*}
  where $\alpha$, $\beta$ and $\gamma$ lie in $N$. By $3$-middle exactness of $M$, one can find a lift $w$ as in the following diagram: 
  \[ \begin{tikzcd}[row sep=scriptsize, column sep=scriptsize]
  & 0 \arrow[from=dd] & & \\
  \alpha \arrow[ur, mapsto]\arrow[rr, crossing over, mapsto] & & \beta \\
  & 0 \arrow[rr, mapsto]  & & 0 \\
  \exists w \arrow[uu, dashed, mapsto] \arrow[ur, dashed, mapsto]  \arrow[rr, dashed, mapsto] & & \gamma \arrow[ur, mapsto] \arrow[uu, crossing over, mapsto]
  \end{tikzcd} \]
 Since $\alpha$ and $\gamma$ lie in $N$, so does $w$.
\end{proof}

\subsection{Proof of \Cref{prop:not-n-exact}}\label{sec:proof-prop-not-n-exact}
We start by showing that, in some cases, one can restrict the study to the finite case (\Cref{lem:discretisation}). Then we prove the result for finite posets (\Cref{lem:finDeathBlock}), and then for general posets. 
But first, let us introduce a special poset, called the~{\em claw}, which will be instrumental both in the proof of \Cref{lem:discretisation} and in the upcoming \Cref{sec:2-exact}. 
\begin{defi}\label{def:claw}
  Let $a=(a_1,\ldots,a_n)\in P$. The \emph{claw} of $P$ at $a$ is the subset~$L:= \bigcup _{i=1}^n L_i \subseteq P$ where the subsets $L_i$ are given by:
  \begin{equation*}
    L_i = \{a_1\}\times \ldots\times \{a_{i-1}\} \times a_i^+ \times \{a_{i+1}\}\times \ldots\times \{a_{n}\}.
  \end{equation*}
  We refer to the subsets $L_i$ as the \emph{arms} of $L$. 
\end{defi}

\begin{lem} \label{lem:discretisation}
  Let $a\in P$ and recall our notation for the principal upset $\frincipal a = \{p\in P : p\geq a\}$. Let $M$ be a pfd persistence module over $a^+$ that is $k$-middle exact for all $2\leq k \leq n$. Assume further that $M$ has only surjective structure maps. Then there exist a finite subposet $P'=T_1'\times \ldots \times T_n' \subseteq a^+$ and a poset morphism~$\pi : a^+ \to P'$ such that $M$ is isomorphic to $\pi^*\left(M_{\mid P'}\right)$.
\end{lem}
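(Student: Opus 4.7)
I would begin by discretizing $M$ axis by axis using pfd-ness at $a$, then propagate the resulting axis-wise isomorphisms of $M$ from the arms of the claw at $a$ to all of $a^+$ via $2$-middle exactness. Concretely, fix an axis $i$ and consider the restriction of $M$ to the arm $L_i$ of the claw at $a$. The kernels $K_i(t)\coloneqq \Ker\!\bigl(M(a)\to M(a_1,\ldots,t,\ldots,a_n)\bigr)$, for $t\in a_i^+$, form an increasing chain of subspaces of the finite-dimensional vector space $M(a)$, so they take only finitely many values $K_i^{(0)}=0\subsetneq K_i^{(1)}\subsetneq\ldots\subsetneq K_i^{(k_i)}$. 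The level sets $U_i^{(j)}\coloneqq \{t\in a_i^+ : K_i(t)=K_i^{(j)}\}$ are then convex subsets of $T_i$ partitioning $a_i^+$ in a totally ordered fashion. I pick representatives $t_i^{(j)}\in U_i^{(j)}$ with $t_i^{(0)}=a_i$, set $T_i'=\{t_i^{(0)},\ldots,t_i^{(k_i)}\}$, and define the monotone map $\pi_i\colon a_i^+\to T_i'$ by $\pi_i(t)=t_i^{(j)}$ for $t\in U_i^{(j)}$. Taking $P'=T_1'\times\cdots\times T_n'\subseteq a^+$ and $\pi=(\pi_1,\ldots,\pi_n)\colon a^+\to P'$ then yields a finite subposet and a poset morphism.

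The heart of the proof is the following propagation claim: for any $p\in a^+$, any axis $i$, and any $t\leq t'$ in $a_i^+$ with $K_i(t)=K_i(t')$, the axis-$i$ structure map $M(p_1,\ldots,t,\ldots,p_n)\to M(p_1,\ldots,t',\ldots,p_n)$ (varying only the $i$-th coordinate) is an isomorphism. I would prove this by induction on the number of coordinates $j\neq i$ at which $p_j>a_j$: the base case, where $p$ lies on $L_i$, is immediate from the definition of $K_i(t)$ and surjectivity; for the inductive step, pick some $j\neq i$ with $p_j>a_j$ and apply $2$-middle exactness on the square in axes $(i,j)$ with the other coordinates frozen at those of $p$. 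The lifting condition of Remark~\ref{rk:mexCondition}, applied with the zero element at the $t'$-corner of the $a_j$-edge, shows that any element in the kernel of the axis-$i$ map at $j$-coordinate $p_j$ is the image under the axis-$j$ map of some element in the kernel of the axis-$i$ map at $j$-coordinate $a_j$; the latter kernel vanishes by the inductive hypothesis applied to the point with $j$-th coordinate replaced by $a_j$, so the former does too. This is the main obstacle, as the extension from the $2$- to the $n$-parameter setting requires an inductive reduction over all auxiliary coordinates rather than a single application of $2$-middle exactness as in the $2$-parameter case.

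To assemble the natural isomorphism $M\cong\pi^*M|_{P'}$, I would define, for each $p\in a^+$, the meet $r_p\coloneqq(\min(p_i,\pi_i(p_i)))_{i=1,\ldots,n}\in a^+$, so $r_p\leq p$ and $r_p\leq\pi(p)$. Since $p_i$ and $\pi_i(p_i)$ belong to the same level set of $K_i$ by construction of $\pi_i$, so does $r_{p,i}$; the propagation claim then yields that $M(r_p\leq p)$ and $M(r_p\leq\pi(p))$, each decomposed as a sequence of axis-wise moves within common level sets, are isomorphisms. Setting $\phi_p\coloneqq M(r_p\leq\pi(p))\circ M(r_p\leq p)^{-1}\colon M(p)\to M(\pi(p))$, naturality along $p\leq q$ follows from a componentwise check that $r_p\leq r_q$ and from the functoriality of $M$ applied to the chains $r_p\leq p\leq q$, $r_p\leq r_q\leq q$ and $r_p\leq\pi(p)\leq\pi(q)$, $r_p\leq r_q\leq\pi(q)$: both sides of the naturality square, precomposed with the iso $M(r_p\leq p)$, reduce to $M(r_p\leq\pi(q))$.
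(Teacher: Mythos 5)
Your proof is correct and follows essentially the same strategy as the paper: partition each arm of the claw at $a$ into finitely many intervals on which the structure maps are isomorphisms (your kernel level sets $U_i^{(j)}$ coincide, under the surjectivity hypothesis, with the paper's intervals $I_i^k$), propagate these isomorphisms into the boxes using middle-exactness, and retract onto a finite grid $P'$. You differ only in level of detail, and usefully so: where the paper argues the propagation step via pushout squares plus an appeal to ``symmetry'' and ``composition,'' you give an explicit induction on the number of coordinates of $p$ exceeding $a$, applying the lifting criterion of \Cref{rk:mexCondition} to a single $2$-cube at each step; and where the paper leaves the final isomorphism $M\cong\pi^*(M|_{P'})$ implicit, you build it explicitly via the meet points $r_p$ and verify naturality.
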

\begin{proof}
  Let us write $a=(a_1,\ldots,a_n)$ and denote by $L:= \bigcup _{i=1}^n L_i$ the claw at $a$.
  Since the structure maps of $M$ are surjective and since $M$ is pointwise finite-dimensional, one can partition each arm $L_i$ of $L$ into a disjoint union of finitely many intervals of $T_i$ on which the structure maps of $M_{|L}$ are all isomorphisms. We denote these intervals by $I^k_i \subseteq T_i$, so that $L_i = \coprod_{k} I_i^k$ for all $i=1,\ldots,n$.

 We will show that if~$p \leq q$ are elements of some box~$I_1^{k_1} \times \ldots \times I_n^{k_n}$, then the structure map~$M(p\leq q)$ is an isomorphism.
 We first prove it when~$p$ and~$q$ belong to a~$2$-slice~$I_1^{k_1} \times I_2^{k_2} \times \{t_3 \}\times \ldots \times \{t_n\}$ for~$t_i \in I_i^{k_i}$. This will imply the result for any other similar~$2$-slice by symmetry, hence the result for any~$p$ and~$q$ in~$I_1^{k_1} \times \ldots \times I_n^{k_n}$ by composition.
 So consider the following commutative diagram in~$M$, where $r,s\in a^+$, $u, v \in I_1^{k_1} $ and $m, n \in I_2^{k_2}$:
 \[ \begin{tikzcd}[row sep=scriptsize, column sep=scriptsize]
 M_v \arrow[r] & M_r \arrow[r, , "\beta''"] &  M_q \\
 M_u \arrow[r] \arrow[u, "\alpha"] & M_p \arrow[r, "\beta'"] \arrow[u , "\alpha'"]& M_s \arrow[u, "\alpha''"] \\
 M_a \arrow[r] \arrow[u] & M_m \arrow[r, "\beta"] \arrow[u] & M_n \arrow[u]
 \end{tikzcd} \]
 By definition of $I_1^{k_1}$ and $I_2^{k_2}$, the maps $\alpha$ and $\beta$ are isomorphisms. 
 Since $M$ is $2$-middle exact and has surjective structure maps, it is $2$-right exact, so all squares involved are pushout squares. Therefore, all parallel maps $\alpha', \alpha'', \beta'$ and~$\beta''$ are isomorphisms too, hence so is the structure map $M(p\leq q)$.

 For each $i\in\{1,\ldots,n\}$, let~$T'_i \subseteq T_i$ be a finite subset such that $P'=T'_1\times\ldots\times T'_n$ contains at least one point~$p_{(k_1,\ldots,k_n)}$ in each box~$I_1^{k_1} \times \ldots \times I_n^{k_n}$. Define~$\pi:a^+ \to P'$ on each box~$I_1^{k_1} \times \ldots \times I_n^{k_n}$ by~$p\mapsto p_{(k_1,\ldots,k_n)}$.
 From the arguments above it follows that~$M$ is isomorphic to $\pi^*\left(M_{\mid P'}\right)$.
\end{proof} 

\begin{lem} \label{lem:finDeathBlock}
  Let $M'$ be a pfd persistence module over a product $P'=T'_1\times \ldots\times T'_n$ of finite totally ordered sets. Assume further that $M'$ is $k$-middle exact for all $2\leq k \leq n$ but not $n$-left exact. Then $M'$ has a death block summand.
\end{lem}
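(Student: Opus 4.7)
The plan is to extract from the failure of $n$-left exactness a single nonzero element $v$ at some point $b$ that dies at the immediate successor of $b$ along every axis, and then spread a copy of $\field_D$ around it by iterated application of \Cref{lem:extension}, where $D$ is the death block with top element $b$.

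Since $M'$ is not $n$-left exact, there exist an $n$-cube $C=\{a_1,a'_1\}\times\ldots\times\{a_n,a'_n\}$ of $P'$ and a nonzero $m\in M'(a)$, with $a=(a_1,\ldots,a_n)$, such that $M'(a\leq(a_1,\ldots,a'_i,\ldots,a_n))(m)=0$ for every $i$. I would then choose $b$ as a coordinate-wise maximal element of the nonempty finite set $S=\{p\in P':p\geq a,\ M'(a\leq p)(m)\neq 0\}$ (which contains $a$), and set $v=M'(a\leq b)(m)$, which is nonzero by construction. A short functoriality argument (if $a'_i\leq b_i$ for some $i$ then $v$ would vanish) shows $b_i<a'_i\leq \max T'_i$, so that the immediate successor $b'_i$ of $b_i$ in $T'_i$ exists. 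Coordinate-wise maximality of $b$ forces the point $(b_1,\ldots,b'_i,\ldots,b_n)$ to lie strictly above $b$ and hence outside $S$, so $M'(b\leq(b_1,\ldots,b'_i,\ldots,b_n))(v)=M'(a\leq(b_1,\ldots,b'_i,\ldots,b_n))(m)=0$; by composition, $v$ also dies at every $(b_1,\ldots,t,\ldots,b_n)$ with $t>b_i$.

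Setting $D_i=\{t\in T'_i:t\leq b_i\}$ and $D=D_1\times\ldots\times D_n$, each $D_i$ is a nonempty proper downward-closed subset of $T'_i$ (proper because $b'_i\notin D_i$), so $D$ is a death block. I would then build a monomorphism $\field_D\hookrightarrow M'$ by iterating \Cref{lem:extension} exactly $n$ times. The base input is the monomorphism $h_0\colon \field_{\{b\}}\hookrightarrow M'|_{T'_1\times\ldots\times T'_{n-1}\times\{b_n\}}$ determined by $v$, whose naturality at points of the slice strictly above $b$ is exactly the dying property just established. A first application of \Cref{lem:extension} with $j=n$ and $J_i=\{b_i\}$ for $i<n$ extends the support to $\{b_1\}\times\ldots\times\{b_{n-1}\}\times D_n$; a second with $j=n-1$ extends the $(n-1)$st axis from $\{b_{n-1}\}$ to $D_{n-1}$; and so on down to $j=1$. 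The hypotheses of \Cref{lem:extension} are satisfied at every step because each singleton $\{b_i\}$ admits $b'_i$ as an upper bound in $T'_i\setminus\{b_i\}$, and every axis already extended to $D_i$ coincides with the full ambient axis of the restricted poset, so falls under the ``full axis'' case. After $n$ iterations one obtains $\field_D\hookrightarrow M'|_D$, which extends by zero to a monomorphism $\field_D\hookrightarrow M'$, and \Cref{lem:injective-summand} then produces the required death block direct summand.

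The main obstacle is choosing the right element $v$: the naive attempt $v=m$ typically fails because $m$ might only die at $a'_i$, which can lie far above the immediate successor of $a_i$, and then the naturality needed for the base step $h_0$ breaks down. The coordinate-wise maximality argument is precisely what converts the ``eventual'' death of $m$ along the $n$-cube $C$ into the ``immediate'' death of $v$ needed to launch the iteration of \Cref{lem:extension}.
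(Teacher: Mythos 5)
Your construction of the witness is correct and tidy: choosing $b$ maximal in $S$ turns the ``eventual'' death of $m$ coming from the failure of $n$-left exactness into an immediate death of $v=M'(a\leq b)(m)$ along every axis, and $h_0$ is indeed a valid monomorphism. This is a genuinely different starting point from the paper, which picks a nonzero element of $N'=\bigcap_{i=1}^n\KerDir{M'}{i}$ at a maximal point of $\supp N'$, lifts it down to $\min(P')$ by iterated $k$-lifting within $N'$, and takes the cyclic submodule generated by the lift.

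There is, however, a gap in the final ``extends by zero'' step. As you describe the iteration (treating each already-extended axis $D_i$ as the full ambient axis of a progressively shrunk poset), $n$ applications of \Cref{lem:extension} yield only a monomorphism $\field_D\hookrightarrow M'|_D$ over the poset $D$ itself. Extending this by zero to a persistence submodule $\field_D\hookrightarrow M'$ over all of $P'$, which \Cref{lem:injective-summand} requires, demands that for every $p\in D$ and every $q\geq p$ in $P'$ with some $q_i>b_i$ the constructed element $v_p$ maps to zero at $q$. For $p=b$ this is exactly the immediate-death property you proved, but for $p<b$ your iteration provides no such guarantee: once axis $i$ has been shrunk to $D_i$, the later applications of \Cref{lem:extension} impose no vanishing constraint beyond $b_i$ in that direction, so the new lifts may acquire components that survive past $b_i$. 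Concretely, with $n=2$, $P'=\{0,1,2\}^2$, and $M'=\field_{\{0,1\}^2}\oplus\field_{\{0\}\times\{0,1,2\}}$, the lift $v_{(0,1)}$ produced by \Cref{lem:extension} can have a nonzero component along the band summand, and then $v_{(0,1)}$ survives at $(0,2)$, so the zero extension is not a morphism. The fix is to keep the ambient poset equal to $P'$ at every step, taking $J_i=D_i$ for the already-extended axes (which admits $b_i'$ as an upper bound in $T_i'\setminus D_i$) rather than treating $D_i$ as a full axis of a restricted poset; each output of \Cref{lem:extension} is then a monomorphism over the slab $T_1'\times\ldots\times b_j^-\times\ldots\times T_n'$, and one checks that it extends by zero past $b_j$ using the naturality, over all of $P'$, of the previous step's extended monomorphism. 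With this adjustment your route closes and remains a legitimate alternative to the paper's argument.
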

\begin{proof}
  The support of $N'=\cap_{i=1}^n \KerDir {M'} i$ is non-empty by assumption, thus has a maximal element $p\in P'$. Consider a non-zero element $m \in \cap_{i=1}^n \KerDir {M'} i (p)$.
  We claim that $m$ has a lift to the minimal point $q=\min(P')$. To prove this, the methodology is the same as in the proof of \Cref{lem:KernelLifting,lem:extension}, solving $k$-lifting problems for increasing values of $2\leq k\leq n$ in $k$-cubes adjacent to $p$ in $P'$.
  Then, the persistence submodule generated by the lift of $m$ at $q$ is isomorphic to a death block. \Cref{lem:injective-summand} ensures that it is a summand of $M'$.
\end{proof}

\begin{proof}[Proof of \Cref{prop:not-n-exact}]
  Assume that $M$ is not $n$-left exact. Since the persistence module $N= \bigcap_{i=1}^n \KerDir M i$ is not $n$-left exact, there must exist $a\in P$ such that its restriction $N_{|a^+}$ is not $n$-left exact either. Then, \Cref{lem:Kersurjective,lem:KernelLifting,lem:discretisation} ensure that there exist a finite subposet $P'\subset a^+$ and a poset morphism $\pi : a^+\to P'$ such that $N_{|a^+}$ is isomorphic to $\pi^*\left(N_{\mid P'}\right)$. By \Cref{lem:finDeathBlock}, the restriction $N_{\mid P'}$ has a death block summand, hence so has $N_{|a^+}$.
  Finally, we extend the death block summand of $N_{|a^+}$ into a death block summand of $N$ by applying \Cref{lem:extension} iteratively for each of the $n$ axes. This death block submodule of $M$ is a summand by \Cref{lem:injective-summand}. The proof of the result when $M$ is not $n$-right exact follows from the previous case by pointwise duality. 
\end{proof}

\section{Case 2: $M$ is $l$-exact but not $(l-1)$-exact.} \label{sec:properlyBoundedExactness}
In this section, we prove \Cref{thm:blockDecomp-exactness} in the particular case of persistence modules which are $l$-exact but not $(l-1)$-exact for some $3\leq l \leq n$.
\begin{prop} \label{pro:Case2}
  Let $M$ be a pfd persistence module over $P$ which is $k$-middle exact for all $2\leq k\leq n$. Assume further that $M$ is $l$-exact but not $(l-1)$-exact for some $3\leq l \leq n$. Then $M$ has an induced block summand.
\end{prop}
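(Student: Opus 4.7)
The plan is to reduce to Case 1 (\Cref{prop:not-n-exact}) applied to the restriction of $M$ to a lower-dimensional slice of $P$, extract a birth or death block summand there, and then extend it to an induced block summand of $M$ via iterated applications of \Cref{lem:extension} and its dual.

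Since $M$ is not $(l-1)$-exact, there exists an $(l-1)$-cube $C_0\subseteq P$ on which the Koszul complex $K_{C_0}(M)$ has non-trivial homology in degree $0$ or in degree $l-1$. By pointwise duality we may assume the failure occurs in degree $l-1$, i.e.\ $M$ is not $(l-1)$-left-exact on $C_0$. Let $S\subseteq P$ be the $(l-1)$-slice containing $C_0$, obtained by fixing the $n-l+1$ coordinates orthogonal to $C_0$ at some values $a_{i_l},\ldots,a_{i_n}$. Since every $k$-cube in $S$ is also a $k$-cube in $P$, the restriction $M|_S$ is pfd, $k$-middle exact for all $2\leq k\leq l-1$, and not $(l-1)$-left-exact on $C_0$. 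Applying \Cref{prop:not-n-exact} to $M|_S$ in the $(l-1)$-parameter setting therefore produces a death block $B\subseteq S$ and a monomorphism $\field_B\hookrightarrow M|_S$ realizing $\field_B$ as a direct summand of $M|_S$.

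Next, I would extend $\field_B$ along each of the $n-l+1$ axes $T_{i_l},\ldots,T_{i_n}$ orthogonal to $S$ to obtain a monomorphism $h\colon \field_{B^\sharp}\hookrightarrow M$, where $B^\sharp = B\times T_{i_l}\times\ldots\times T_{i_n}$, which is by definition an induced block of $P$. This is carried out by iterated use of \Cref{lem:extension} downward (to extend to $a_{i_m}^-$) and of its dual version upward (to extend to $a_{i_m}^+$) along each orthogonal axis. At every step, the support remains a Cartesian product of subsets of each axis that are either full axes or upward/downward-closed intervals admitting bounds outside, so the hypotheses of \Cref{lem:extension} are preserved throughout.

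The main obstacle will be to show that $h$ is actually a direct summand, since $B^\sharp$ is an induced block rather than a death block and \Cref{lem:injective-summand} therefore does not apply to $h$ directly. My plan is to leverage the fact that $M$ is $l$-exact, hence $m$-exact for all $l\leq m\leq n$ by \Cref{lem:exactInduction}. For each value $t=(t_{i_l},\ldots,t_{i_n})$ of the orthogonal coordinates, the restriction of $h$ to the parallel slice $S_t$ is a monomorphism from a death block module into $M|_{S_t}$, and hence exhibits a summand of $M|_{S_t}$ by \Cref{lem:injective-summand}. Let $N_t$ denote the complementary summand. The key point is to check that the exactness of $M$ forces the structure maps of $M$ along the orthogonal axes to restrict to well-defined maps $N_t\to N_{t'}$ for $t\leq t'$ and to assemble the $N_t$'s into a subfunctor $N\subseteq M$ with $M\cong \field_{B^\sharp}\oplus N$. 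The case where $M$ fails to be $(l-1)$-right-exact follows by dual arguments, producing an induced block whose base in $S$ is a birth block instead.
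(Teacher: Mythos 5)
Your proposal follows the same overall structure as the paper's proof: locate an $(l-1)$-slice $W$ on which left-exactness fails, apply \Cref{prop:not-n-exact} to extract a death block summand of $M|_W$, then extend it along the $n-l+1$ orthogonal axes to an induced block summand of $M$. Your target support $B^\sharp = B\times T_{i_l}\times\ldots\times T_{i_n}$ is the right one. However, there are two genuine gaps in the way you carry out the extension and the splitting.

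First, the ``dual version of \Cref{lem:extension}'' you invoke to extend upward does not exist in the form you need. Pointwise duality sends a persistence module over $P$ to one over $P^{\mathrm{op}}$ and sends submodules to quotient modules; so dualizing \Cref{lem:extension} gives a statement about extending \emph{epimorphisms} in the positive direction, not monomorphisms. The mechanism the paper uses instead is $l$-left exactness directly: if the image $m_p$ of a point of the death block were killed when pushed forward past some $t_j$ along an axis $T_j$ with $j\geq l$, then combined with the vanishing of $m_p$ along each of the axes $T_1,\ldots,T_{l-1}$ (which holds because each $J_i$ admits an upper bound in $T_i\setminus J_i$ and the monomorphism lives over $W$), one would produce a nonzero class in $H_l(K_C(M))$ for a suitable $l$-cube $C$ with minimum at $p$, contradicting $l$-left-exactness. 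This is what the paper means by ``this block extends monomorphically in positive directions''; \Cref{lem:extension} then handles the downward extensions. A related point your write-up glosses over: after an upward extension, the $j$-th slot becomes $t_j^+$, which is neither a full axis nor an interval admitting an upper bound, so the hypotheses of \Cref{lem:extension} are \emph{not} preserved in the order ``up then down'' as you describe; the order of operations needs care.

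Second, for the splitting, your parallel-slice plan is fragile: the complementary summand $N_t$ of $\field_B$ inside $M|_{S_t}$ furnished by \Cref{lem:injective-summand} is not canonical, and showing that one can make compatible choices so the $N_t$ assemble into a subfunctor $N$ with $M\cong\field_{B^\sharp}\oplus N$ is itself a non-trivial argument that you have not carried out. The paper's route is much shorter: it simply invokes \Cref{lem:injective-summand}. You are right that, as phrased in the paper, \Cref{lem:injective-summand} only covers death blocks, whereas $B^\sharp$ is an induced block; but the cited result \cite[Lemma~2.1]{Botnan2018a} holds for submodules $\field_I$ with $I$ any directed downward-closed subset, and $B^\sharp = \prod_{j<l}J_j\times\prod_{j\geq l}T_{j}$ is precisely such a set (each factor is a down-set of a totally ordered set, hence directed). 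Invoking that lemma in its actual generality makes your step 5 immediate and sidesteps the gluing problem entirely.
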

\begin{proof}
  Suppose that $M$ is $l$-exact but not $(l-1)$-left exact. Then, there exists a $(l-1)$-slice $W$ of $P$ such that $M_{|W}$ is not $(l-1)$-left exact. By symmetry, assume that $W = \prod_{j = 1}^{l-1} T_{j} \times \prod_{j=l}^n \lbrace t_j \rbrace$. 
  By \Cref{prop:not-n-exact}, the restriction $M_{|W}$ has a death block summand:
  \[ \field_{\prod_{j = 1}^{l-1} J_{j} \times \prod_{j=l}^n \lbrace t_j \rbrace } \hookrightarrow M_{|W},
  \]
  where $J_{j} \subseteq T_{j}$ are non-empty downward-closed intervals. 
  Since $M$ is $l$-left exact, this block extends monomorphically in positive directions of the axes $T_{l},\ldots, T_{n}$.
  Then, applying \Cref{lem:extension} iteratively on these $n-l+1$ axes we get a persistence submodule:
  \[
    \field_{\prod_{j=1}^{l-1} J_{j} \times \prod_{j=l}^n t_j^-} \hookrightarrow M, 
  \]
  which is a direct summand of $M$ by \Cref{lem:injective-summand}. The result when $M$ is $l$-exact but not $(l-1)$-right exact follows from the previous case and pointwise duality.
\end{proof}

\section{Case 3: $M$ is $2$-exact}\label{sec:2-exact}
In this section, we prove \Cref{thm:blockDecomp-exactness} in the particular case of $2$-exact persistence modules. Recall that by \Cref{lem:exactInduction}, $2$-exact persistence modules are $k$-exact for all~$2\leq k\leq n$.
\begin{prop}\label{prop:2-exact}
  Let $M$ be a pfd persistence module over $P$ which is $2$-exact. Then~$M$ has an induced block summand.
\end{prop}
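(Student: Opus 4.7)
The plan follows the outline given at the end of \Cref{sec:3d-middle-exactness}: first prove block-decomposability of $M$ in the case where $P$ is an $n$-cube via a claw construction, then extract a single induced block summand in the general case via discretization.

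For the cube case, suppose $P$ is an $n$-cube with minimum $a$, and let $L \subseteq P$ be the claw at $a$, with arms $L_i$ running along each axis. The central technical input is \Cref{lem:DecompRestr} (to be established separately), which asserts that if $M$ is $2$-exact then $M|_L$ is interval-decomposable. Each interval summand $\field_J$ of $M|_L$ is supported on a convex connected subset of the claw, and therefore takes one of two shapes: a segment on a single arm, or a union of initial segments of several arms containing the center $a$. In either case, $J$ extends canonically to an induced block $B(J) \subseteq P$, so that writing $M|_L \cong \bigoplus_\alpha \field_{J_\alpha}$ and pushing forward via the left Kan extension $\Lan{\iota}$ along the inclusion $\iota \colon L \hookrightarrow P$ produces a block-decomposable module $\Lan{\iota}(M|_L) \cong \bigoplus_\alpha \field_{B(J_\alpha)}$. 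The counit morphism $\Lan{\iota}(M|_L) \to M$ provided by the adjunction~\eqref{eq:Lan_restr_adj} should then be an isomorphism, as one verifies pointwise by induction on cube faces using $k$-middle exactness; this yields a full block decomposition of $M$ on the cube.

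For the general case, I apply a discretization argument in the spirit of \Cref{lem:discretisation}. Choose $a \in P$ in the support of $M$ and consider $M|_{a^+}$; $2$-exactness provides enough regularity along each axis to organize points into finitely many intervals of constancy, yielding a finite subposet $P' = T_1' \times \cdots \times T_n' \subseteq a^+$ and a poset morphism $\pi \colon a^+ \to P'$ with $M|_{a^+} \cong \pi^*(M|_{P'})$. Generalizing the cube argument slightly to the finite grid $P'$ (same claw/Kan-extension mechanism at the minimum of $P'$) gives a block decomposition of $M|_{P'}$, and since $M$ is $2$-exact all summands must be induced blocks. Picking one such summand, pulling it back along $\pi$, and extending via iterated application of \Cref{lem:extension} along each of the remaining $n$ axes produces an induced block submodule of $M$, which is then a direct summand by \Cref{lem:injective-summand} or its birth-block dual obtained via pointwise duality.

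The main obstacle is \Cref{lem:DecompRestr} itself: since the claw has infinite representation type for $n \geq 3$, interval-decomposability of $M|_L$ cannot come from quiver theory alone. The $2$-exactness of $M$ on every $2$-cube spanning two distinct arms of $L$ must be leveraged to control how the arm-wise kernel filtrations of $M(a)$ interact, so that the $1$-parameter interval decompositions on each arm (given by the standard result of Botnan--Crawley-Boevey) can be matched at the center $a$ into a single coherent global decomposition of $M|_L$. A secondary subtlety in the cube step is verifying that the counit $\Lan{\iota}(M|_L) \to M$ is indeed an isomorphism, which amounts to showing that $k$-exactness for all $2 \leq k \leq n$ forces $M$ to agree with the Kan extension of its restriction to $L$; this is proved by induction on the dimension of the face, the inductive step reducing to a solvable lifting problem thanks to the appropriate-dimensional exactness of $M$.
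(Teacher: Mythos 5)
Your treatment of the $n$-cube case (restricting to the claw $L$ at the minimum, invoking \Cref{lem:DecompRestr}, and showing that the counit $\Lan{\iota}(M|_L) \to M$ is an isomorphism via inductive cube-filling) matches the paper's \Cref{sec:cube}, specifically \Cref{lem:clawExtension,lem:DecompRestr,pro:discrete2ex}. The gap is in your reduction of the general case to a finite grid. You invoke a discretization ``in the spirit of \Cref{lem:discretisation},'' but that lemma crucially assumes that $M$ has surjective structure maps, which a $2$-exact module need not have. Without that hypothesis no finite subgrid $P' \subseteq a^+$ with $M|_{a^+} \cong \pi^*(M|_{P'})$ need exist: over $\RRR^2$ the pfd module $M = \bigoplus_{k\geq 1} \field_{(k,-k)^+}$ is a direct sum of free (hence $2$-exact) modules, yet for every $a$ the restriction of $M$ to the first arm of the claw at $a$ has dimension function eventually equal to $\lfloor t\rfloor$ and so has infinitely many jump points, which rules out any such $\pi$. (The surjectivity hypothesis in \Cref{lem:discretisation} is exactly what bounds the number of jump points on each arm by $\dim M(a)$.) A secondary issue is that your last extension step applies \Cref{lem:extension} alone to propagate a summand from $a^+$ to all of $P$, but that lemma lifts monomorphisms only in decreasing directions, and \Cref{lem:injective-summand} only applies to submodules supported on downward-closed sets; you also need the complementary mechanism (extension in increasing directions via $2$-left exactness) that the paper uses in \Cref{lem:2-exact-1,pro:Case2}.

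What the paper actually does for the general case (\Cref{sec:proof-prop-2-exact}) is quite different and avoids discretization entirely. It splits on whether $\KerDir{M}{i} \cap \ImDir{M}{i}$ is nonzero for some axis $i$. If it is (\Cref{lem:2-exact-1}), a suitable $2$-slice carries a nonzero part of this intersection, the $2$-parameter theorem (\Cref{thm:2d-block-rect-decomp}) yields a death-block summand there, and this is spread to all of $P$ by alternating \Cref{lem:extension} (decreasing directions) with $2$-left exactness (increasing directions). If $\KerDir{M}{i} \cap \ImDir{M}{i} = 0$ for every $i$ (\Cref{lem:2-exact-2}), an induction on $n$ via the intersection lattice of the submodules $\ImDir{M}{i}$ is carried out, and the $n$-cube result (\Cref{pro:discrete2ex}, \Cref{rk:discrete2ex}) is used only locally on cubes to show that $M$ is the (direct) sum of the $k$-fold intersections $\bigcap_{i\in\JJ}\ImDir{M}{i}$, each of which is pulled back from a lower-dimensional slice where the inductive hypothesis applies.
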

We first prove the result when $P$ is an $n$-cube in \Cref{sec:cube} using restrictions to claws. Then we prove the general result in \Cref{sec:proof-prop-2-exact}.

\subsection{The case of an $n$-cube}\label{sec:cube}
In this section we prove \Cref{prop:2-exact} when $P$ is an $n$-cube using restrictions to claws. Recall our notations for persistence modules on $n$-cubes from \Cref{sec:3d-middle-exactness}.

\begin{lem} \label{lem:clawExtension}
  Let $M'$ be a $2$-exact pfd persistence module over an $n$-cube $C\subset P$. Denote by $\iota \colon L \hookrightarrow C$ the embedding of the claw at $\min(C)$.
  Then $M' \cong \Lan{\iota} \left(\iota^*M'\right)$.  
\end{lem}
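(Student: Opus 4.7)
The plan is to prove the lemma by showing that the counit $\epsilon \colon \Lan{\iota}(\iota^* M') \to M'$ of the adjunction $\Lan{\iota} \dashv \iota^*$ is a pointwise isomorphism. Identify $C$ with $\PPP([n])$, so that the claw is $L = \{\emptyset\} \cup \{\{i\} : i \in [n]\}$. Because $\iota$ is a fully faithful inclusion, the counit $\epsilon_A$ is automatically an isomorphism at every $A \in L$, so it remains to check this at vertices with $|A| \geq 2$. For such an $A$, the slice $L \cap A^-$ is a star with centre $\emptyset$ and arms $\{i\}$ for $i \in A$, so the colimit formula identifies $(\Lan{\iota}(\iota^* M'))(A)$ with the iterated pushout
\[
Q_A \;:=\; M'_{\{i_1\}} \sqcup_{M'_\emptyset} M'_{\{i_2\}} \sqcup_{M'_\emptyset} \ldots \sqcup_{M'_\emptyset} M'_{\{i_k\}}, \qquad A = \{i_1, \ldots, i_k\}.
\]
The counit $\epsilon_A \colon Q_A \to M'_A$ is then induced by the canonical cocone whose legs $M'_{\{i\}} \to M'_A$ are the edge-path compositions in the cube; these agree on $M'_\emptyset$ by commutativity of $C$.

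I would prove that $\epsilon_A$ is an isomorphism by induction on $|A| \geq 2$. Fix any $j \in A$ and set $A' = A \setminus \{j\}$. A routine iteration of pushouts gives $Q_A \cong Q_{A'} \sqcup_{M'_\emptyset} M'_{\{j\}}$, and the induction hypothesis $Q_{A'} \cong M'_{A'}$ then yields $Q_A \cong M'_{A'} \sqcup_{M'_\emptyset} M'_{\{j\}}$. It therefore suffices to establish the following \emph{skew pushout property}: for every $A$ with $|A|\geq 2$ and every $j \in A$, the square
\[
\begin{tikzcd}
M'_\emptyset \rar \dar & M'_{A'} \dar \\
M'_{\{j\}} \rar & M'_A
\end{tikzcd}
\]
whose legs are the uniquely determined diagonal compositions in the cube is a pushout in $\Vect$.

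This skew pushout property is the main obstacle, and I would prove it by a secondary induction on $|A| \geq 2$. The base case $|A| = 2$ is immediate: the square is then an actual $2$-face of $C$, on which $2$-right-exactness is part of the $2$-exactness hypothesis. For $|A| \geq 3$, pick any $i \in A'$; the subset $\{A \setminus \{i,j\},\, A',\, A \setminus \{i\},\, A\}$ is a genuine $2$-face of $C$, and $2$-right-exactness on it yields
\[
M'_A \cong M'_{A'} \sqcup_{M'_{A \setminus \{i,j\}}} M'_{A \setminus \{i\}}.
\]
Applying the induction hypothesis to $A \setminus \{i\}$ (with the same $j$) gives $M'_{A \setminus \{i\}} \cong M'_{A \setminus \{i,j\}} \sqcup_{M'_\emptyset} M'_{\{j\}}$. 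Substituting and using the associativity of iterated pushouts---equivalently, computing the colimit of the zigzag $M'_{A'} \leftarrow M'_{A \setminus \{i,j\}} \leftarrow M'_\emptyset \to M'_{\{j\}}$---produces the required skew pushout $M'_A \cong M'_{A'} \sqcup_{M'_\emptyset} M'_{\{j\}}$ with the correct diagonal structure maps, by the uniqueness of diagonals in the commutative cube. Combining the two inductions shows that $\epsilon_A$ is an isomorphism for every $A$, completing the proof.
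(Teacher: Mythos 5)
Your proof is correct, and it reaches the conclusion by a somewhat different route from the paper's. The paper interpolates between $L$ and $C$ by a linear filtration $L = L_0 \subset L_1 \subset \cdots \subset L_N = C$ that adds one vertex at a time (completing $k$-cubes for increasing $k$), then invokes the composition theorem for left Kan extensions to reduce the claim to single-step comparisons, each handled by a pushout argument via $2$- and $(k-1)$-right-exactness. You instead evaluate $\Lan{\iota}(\iota^*M')$ pointwise: at each $A$ the colimit formula gives an iterated pushout $Q_A$ of the arms over $M'_\emptyset$, and you show $\epsilon_A \colon Q_A \to M'_A$ is an isomorphism by induction on $|A|$, reducing to the \emph{skew pushout property} which you establish by a secondary induction together with a pushout-pasting argument over genuine $2$-faces. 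Your argument is more elementary in two respects: it avoids the composition theorem for Kan extensions, and it only ever uses $2$-right-exactness directly (building higher-dimensional pushouts by pasting $2$-faces) rather than appealing to $(k-1)$-right-exactness of the Koszul complex for $k \geq 3$ --- the latter is available here by \Cref{lem:exactInduction}, but your route makes the dependence on the $2$-exactness hypothesis particularly transparent. Both arguments ultimately rest on the same input, namely that the $2$-faces of $C$ are pushouts.
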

\begin{proof}
   Note that one can complete $L$ into $C$ by successively completing all $k$-cubes for all $2\leq k\leq n$ in increasing order starting by cubes adjacent to $\min(C)$. 
   This gives rise to a linear filtration $j_s \colon L_s \hookrightarrow L_{s+1}$, with $L_0 = L$ and $L_N = C$ where in each step exactly one point is added. 
   Denote by $\iota_s \colon L_s \hookrightarrow C$ the inclusions.
   By the composition theorem for Kan extensions \cite[Proposition 3.7.4]{BorceuxI} it is enough to show that $N_s := \Lan{j_s} \left( \iota_s^* M' \right)$ is isomorphic to $\iota_{s+1} ^* M'$ for all $0\leq s \leq N-1$.

   First, consider the case where $j_s\colon L_s \hookrightarrow L_{s+1}$ is the completion of a~$2$-cube:
   \begin{equation*}
    \begin{tikzcd}[ampersand replacement=\&]
      \bullet \&  \\ 
      \bullet \arrow[u] \arrow[r] \& \bullet
      \end{tikzcd}
      \hookrightarrow
      \begin{tikzcd}[ampersand replacement=\&]
        \bullet \arrow[r]\& \bullet  \\ 
        \bullet \arrow[u] \arrow[r] \& \bullet\arrow[u]
      \end{tikzcd}
   \end{equation*}
   Then, the restrictions of $N_s$ and $\iota_{s+1}^*M'$ to~$L_s$ are isomorphic, since they are both isomorphic to $\iota_{s}^*M'$. Moreover, at the $2$-cube $C_s$ having the unique additional point $q\in L_{s+1} \setminus L_s$ as top-right corner, the left Kan extension $N_s = \Lan{j_s} \left( \iota_{s}^*M' \right)$ is a pushout; see {\cite[IX.3, Theorem 1]{Maclane}}. Meanwhile, the $2$-right exactness of $M'$ and the right-exactness of the restriction functor ensure that the restriction of $\iota_{s+1}^* M'$ to the $2$-cube $C_s$ is also a pushout. Uniqueness of pushouts thus yields $N_s \cong \iota_{s+1}^* M'$. 
   
   In the case where $j_s\colon L_s \hookrightarrow L_{s+1}$ is the completion of a $k$-cube without its apex to a complete $k$-cube, we observe that there are $k$ new $(k-1)$-faces added to the graph. We can illustrate the situation when $k=3$:
   \begin{equation*}
    \begin{tikzcd}[ampersand replacement=\&, row sep=small, column sep=small]
      \& \bullet\arrow[from=dd] \& \& \\
      \bullet\arrow[ur] \arrow[rr, crossing over] \& \&\bullet \\ 
      \& \bullet\arrow[rr] \& \& \bullet\\
      \bullet \arrow[uu]  \arrow[ur] \arrow[rr] \& \& \bullet \arrow[ur] \arrow[uu, crossing over] \& \\
    \end{tikzcd}
    \hookrightarrow\quad
    \begin{tikzcd}[ampersand replacement=\&, row sep=small, column sep=small]
      \& \bullet\arrow[from=dd] \arrow[rr]\& \& \bullet\\
      \bullet\arrow[ur] \arrow[rr, crossing over] \& \&\bullet\arrow[ur] \\ 
      \& \bullet\arrow[rr] \& \& \bullet\arrow[uu]\\
      \bullet \arrow[uu]  \arrow[ur] \arrow[rr] \& \& \bullet \arrow[ur] \arrow[uu, crossing over] \&
    \end{tikzcd}
   \end{equation*}
   By $(k-1)$-right exactness of $M'$, these $(k-1)$-faces are all pushouts and the isomorphism $N_s \cong \iota_{s+1}^* M'$ follows similarly from the uniqueness of pushouts.
\end{proof}

Persistence modules over $L$ are not always interval-decomposable, see for instance \Cref{ex:2-mid_not_3-mid}. However, restrictions of $2$-exact persistence modules do, as shown by the following lemma.

\begin{lem} \label{lem:DecompRestr}
  Consider the setting of \Cref{lem:clawExtension}. Then $\iota^*M'$ is interval-decomposable.
\end{lem}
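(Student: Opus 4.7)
The plan is to proceed by induction on $n$, modelled on the 2-parameter case. The base case $n = 2$: the claw $L$ is the quiver of type $A_3$ (centre $a := \min C$ with two outgoing arrows to the tips $b_1, b_2$), and every pfd representation of $L$ is interval-decomposable by the classical classification of Dynkin-type quiver representations.

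For the inductive step, write $V_0 = M'(a)$, $V_i = M'(b_i)$, and $f_i \colon V_0 \to V_i$ for the structure maps, with $K_i = \ker f_i$. The 2-left-exactness of $M'$ on the 2-cube in directions $(i, j)$ at the minimum gives $K_i \cap K_j = 0$ for $i \ne j$. Two standard reductions now simplify the module: splitting off a complement of $\img f_i$ in each $V_i$ yields direct summands isomorphic to $\field_{\{b_i\}}$, reducing to the case where every $f_i$ is surjective; splitting off $\bigcap_i K_i \subseteq V_0$ yields direct summands isomorphic to $\field_{\{a\}}$, reducing to $\bigcap_i K_i = 0$. Under these reductions, it remains to produce a grading $V_0 = \bigoplus_{\emptyset \ne S \subseteq [n]} W_S$ with $K_i = \bigoplus_{S \not\ni i} W_S$ for each $i$. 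From such a grading, a basis of each $W_S$ produces interval summands of $\iota^* M'$ with supports $\{a\} \cup \{b_j : j \in S\}$, and the grading property automatically forces the family $\{f_i(W_S)\}_{S \ni i}$ to sit in direct sum inside $V_i$.

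To produce the grading, I would restrict $M'$ to the bottom $(n-1)$-face $C' = \prod_{i < n}\{a_i, a_i'\} \times \{a_n\}$; the restriction is still 2-exact, so the inductive hypothesis applied to the claw of $C'$ gives a grading $V_0 = \bigoplus_{S \subseteq [n-1]} W'_S$ with $K_i = \bigoplus_{S \not\ni i} W'_S$ for $i < n$. I would then refine this grading by $f_n$: for each $S \subseteq [n-1]$, choose a splitting $W'_S = (W'_S \cap K_n) \oplus U_S$ and set $W_S := W'_S \cap K_n$ and $W_{S \cup \{n\}} := U_S$.

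The main obstacle is verifying that $K_n = \bigoplus_{S \not\ni n} W_S$, which amounts to showing that $\{f_n(W'_S)\}_{S \subseteq [n-1]}$ is in direct sum inside $V_n$. The pairwise case $f_n(W'_S) \cap f_n(W'_{S'}) = 0$ for $S \ne S'$ follows from the 2-left-exactness of $M'$ at the \emph{top} 2-faces in the $(i, j)$-plane at depth $n$---which translates to $f_n(K_i) \cap f_n(K_j) = 0$ inside $V_n$ for distinct $i, j$---together with the inclusions $W'_S \subseteq \bigcap_{i \in [n-1] \setminus S} K_i$ from the inductive grading. Controlling sums involving three or more pieces is the technical crux; here I expect to invoke the higher $k$-exactness of $M'$ on $k$-faces of $C$ for $k \ge 3$, available via \Cref{lem:exactInduction}, to extract further distributivity relations among the $K_i$'s that force the full direct-sum property.
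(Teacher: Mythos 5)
You take a genuinely different route from the paper, but the route has a gap that you yourself flag and do not close.

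The paper's proof never attempts to build the grading of $V_0 = M'(\min L)$ explicitly. Instead it first reduces to $M'$ indecomposable (Krull--Schmidt on the finite poset $C$), then uses the adjunction~\eqref{eq:Lan_restr_adj} together with \Cref{lem:clawExtension} to get an isomorphism of $\field$-algebras $\End(\iota^* M') \cong \End(M')$; hence $\iota^* M'$ inherits indecomposability from $M'$. It then shows that an \emph{indecomposable} restriction of a $2$-exact module to the claw must be an interval module, by a short case analysis (support not meeting all arms, or a non-injective arm map producing a death-block summand so that $M' \cong \field_I$, or all arm maps isomorphisms). This neatly sidesteps every question about how the kernels $K_i$ sit together inside $V_0$.

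Your approach, by contrast, tries to produce the decomposition directly from linear algebra on the star quiver. The reductions (splitting off complements of $\img f_i$, splitting off $\bigcap_i K_i$, which indeed vanishes by $n$-left exactness via \Cref{lem:exactInduction}) are fine, and so is the observation that $2$-left exactness on the squares at $\min C$ gives $K_i \cap K_j = 0$ and on the top squares gives $f_n(K_i) \cap f_n(K_j) = 0$ for $i \neq j$. But the step you label ``the technical crux'' is a real gap, not a routine verification: pairwise trivial intersection of $n \geq 3$ subspaces does \emph{not} imply that they are independent (three lines in general position in a plane), subspace lattices are not distributive, and the $n$-arm star quiver is of wild representation type for $n\ge 5$. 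So without a concrete argument, there is no reason to expect the family $\{f_n(W'_S)\}_S$ (equivalently, the $K_i$'s) to be in direct sum. It is true that the needed independence does hold here (it follows from the conclusion you are trying to prove), but extracting it directly from $k$-exactness of Koszul complexes is precisely the kind of combinatorial linear algebra the paper's endomorphism-ring argument is designed to avoid, and ``I expect to invoke higher $k$-exactness to extract distributivity relations'' is not a proof. If you want to salvage this route, you should at minimum pass first to the indecomposable case, where the dichotomy ``some $f_i$ non-injective'' vs.\ ``all $f_i$ injective'' collapses the independence question trivially --- which is essentially what the paper does.
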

\begin{proof}
  Since $M'$ is pfd and $C$ is a finite poset, a simple induction shows that $M'$ decomposes as a direct sum of indecomposable modules, so we can assume without loss of generality that $M'$ is indecomposable. In the present case this implies that the restriction $\iota ^* M'$ is also indecomposable. Indeed, we have the following isomorphisms:
  \begin{equation}
    \label{eq:Lan_restr_adj_bis}
    \begin{split}
      \Hom_{\Mod \field L} \left( \iota^* M' ,\iota^* M'  \right) &\cong  \Hom_{\Mod \field P} \left( \Lan {\iota} \left( \iota^* M' \right) , M' \right) \\
      &\cong \Hom_{\Mod \field P} \left( M', M' \right),
    \end{split}
  \end{equation}
  where the first isomorphism follows from~\eqref{eq:Lan_restr_adj} and the second from~\Cref{lem:clawExtension}. These are even isomorphisms of $\field$-algebras, as the involved isomorphisms of vector spaces are natural. Having $M'$ indecomposable implies that its endomorphism ring contains no non-trivial idempotents, which in turn implies by~\eqref{eq:Lan_restr_adj_bis} that the endomorphism ring of $\iota^* M'$ also has no non-trivial idempotents, and therefore that $\iota^* M'$ itself is indecomposable.

  Let us now show that $\iota^* M'$ is an interval module.
  We prove the assertion by induction on $n\geq 2$. 
  When $n=2$, the persistence module $\iota^* M'$ must be an interval module by the decomposition theory of posets of type $A_n$.
  Assume $n\geq 3$. 
  If the support of $\iota^* M'$ does not intersect the interior of all arms of $L$, then it is an interval module by the induction hypothesis.
  So assume that the support of $\iota^*M'$ intersects the interior of all arms of $L$. 
  All structure maps of $\iota^*M'$ must be surjective, as we could otherwise split off a summand on an arm. 
  
  Assume first that one of the structure maps of $\iota^* M'$ is not injective on an arm of $L$. Then, there must be a non-trivial element $m \in M'(\min L)$ which is mapped to zero along, say, the $i$-th axis.
  By $2$-left exactness of $M'$, the structure maps along the other axes must be injective. Thus, the submodule of $M'$ spanned by $m$ is isomorphic to $\field_I$, where $I \subseteq C$ is the $(n-1)$-slice parallel to the $i$-th axis containing $\min(C)=\min(L)$. The interval $I$ is a death block, therefore $\field_I$ is a summand of $M'$, by \Cref{lem:injective-summand}. By our assumption that $M'$ is indecomposable, this implies that $M' \cong \field_I$, hence the restriction $\iota^* M'$ must be an interval module. 
  
  Assume now that the structure maps of $\iota^* M'$ are injective on all arms of $L$. Then, all structure maps of $\iota^* M'$ are isomorphisms. Since $\iota^* M'$ is indecomposable, then so is $M'(\min L)$, i.e., it is a one-dimensional vector space. Thus $\iota^* M'$ is isomorphic to $\field_L$, an interval module.
\end{proof}

\begin{prop} \label{pro:discrete2ex}
  Consider the setting of \Cref{lem:clawExtension}. Then $M'$ is block-decomposable.
\end{prop}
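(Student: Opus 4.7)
The plan is to combine \Cref{lem:clawExtension} and \Cref{lem:DecompRestr} with a direct calculation of left Kan extensions. By \Cref{lem:DecompRestr}, the claw-restriction decomposes as $\iota^* M' \cong \bigoplus_{j\in J} \field_{I_j}$ where each $I_j$ is an interval of $L$. Since $\Lan{\iota}$ is a left adjoint, it preserves direct sums, and combining with \Cref{lem:clawExtension} yields
\[
M' \cong \Lan{\iota}(\iota^*M') \cong \bigoplus_{j\in J} \Lan{\iota}(\field_{I_j}).
\]
It therefore suffices to show that each summand $\Lan{\iota}(\field_I)$ is a block module on $C$.

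Because $C$ is an $n$-cube, every arm of the claw $L$ has exactly two elements---the center $a = \min(C)$ and the cover $a_i'$ along axis $i$---so the intervals of $L$ come in two types: the singletons $\{a_i'\}$, and the subsets $\{a\} \cup \{a_i' : i \in T\}$ for some (possibly empty) $T \subseteq \{1, \ldots, n\}$. My next step would be to compute the pointwise Kan extension $\Lan{\iota}(\field_I)(p) = \varinjlim \field_I|_{L\cap p^-}$ for each interval type, using that $L \cap p^- = \{a\} \cup \{a_i' : p_i = t_i'\}$ for any $p \in C$. A direct computation gives $\Lan{\iota}(\field_{\{a_i'\}}) \cong \field_B$ with $B = \{p \in C : p_i = t_i'\}$, and $\Lan{\iota}(\field_{\{a\} \cup \{a_i' : i \in T\}}) \cong \field_B$ with $B = \prod_{i \in T} \{t_i, t_i'\} \times \prod_{i \notin T} \{t_i\}$. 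The crucial observation in the second case is that the colimit at $p$ collapses to zero as soon as some $a_j' \in L \cap p^-$ fails to lie in $I$, because then the identification $x \sim \field_I(a \leq a_j')(x) = 0$ kills the canonical representative in $\field_I(a)$.

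The final step is to identify each support $B$ as an $n$-block of $C$ via the recursive definition. When $I = \{a_i'\}$, one peels off the axes $k \neq i$ one at a time via the induced block construction until reaching the $2$-parameter base case $\{t_i'\} \times \{t_k, t_k'\}$, which is a band. When $I = \{a\} \cup \{a_i' : i \in T\}$, the support $B$ is the death $n$-block (for $T = \emptyset$), the full cube $C$ (for $T = \{1, \ldots, n\}$, an induced block built from a $2$-band at the base case), or, in the mixed case, an iterated induced extension along the axes of $T$ of the lower-dimensional death block $\prod_{i \notin T} \{t_i\}$.

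The main obstacle I anticipate is the bookkeeping in the colimit computation---in particular, the careful tracking of identifications that determine when the colimit collapses to zero and when the structure maps reduce to identities between non-zero components. Once this is in place, the recognition of each support as an $n$-block is a straightforward combinatorial check from the recursive definition, producing the desired block decomposition of $M'$.
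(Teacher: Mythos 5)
Your proof is essentially the same as the paper's: both reduce, via \Cref{lem:clawExtension} and \Cref{lem:DecompRestr} and the additivity of $\Lan{\iota}$, to showing that $\Lan{\iota}\field_I$ is a block module for each interval $I$ of the claw $L$, a step the paper dispenses with as a ``simple exercise'' and which your pointwise colimit computation carries out correctly. One small slip in the bookkeeping: in the mixed case with $\lvert T\rvert = n-1$, the recursive peeling bottoms out at a $2$-parameter band $\{t_{i_0}\}\times\{t_k,t_k'\}$ rather than a lower-dimensional death block (there is no notion of $1$-block), though the conclusion that the support is an $n$-block is unaffected.
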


\begin{proof}
  By \Cref{lem:clawExtension}, we know that $M' \cong \Lan{\iota} \left( \iota^*M' \right)$. By \Cref{lem:DecompRestr}, there is a direct sum decomposition $\iota^* M' \cong \bigoplus _{I \in \II} \field_I$ for a set of intervals $\II$ of $L$.
  Then, by \Cref{lem:clawExtension} and additivity of left Kan extensions, we obtain:
  \[
    M' \cong \Lan{\iota} \left(\iota^* M'\right) \cong \Lan {\iota} \left(\bigoplus_{I \in \II} \field_I\right) \cong \bigoplus_{I \in \II} \left(\Lan{\iota}\field_I\right)
  \]
  It is a simple exercise to check that for any interval $I\in\II$, the persistence module $\Lan{\iota}\field_I$ is an induced block module.
\end{proof}

\begin{rk}\label{rk:discrete2ex}
  In fact, the proof of \Cref{pro:discrete2ex} ensures that the block summands of~$M'$ are obtained by left Kan extension from the interval summands of the restriction of~$M'$ to the claw.
\end{rk}

\subsection{Proof of \Cref{prop:2-exact}}\label{sec:proof-prop-2-exact}
Let $M$ be a pfd persistence module over $P$. Recall from \Cref{sec:study-kernel-submodules} that~$\KerDir{M}{i}$ is the persistence submodule of~$M$ consisting of all elements which are eventually mapped to zero by the structure maps of~$M$ along the~$T_i$-axis. Similarly, we denote by~$\ImDir{M}{i}$ the persistence submodule of $M$ which consists of elements that have preimages by all structure maps of $M$ parallel to the $T_i$-axis. The proof of \Cref{prop:2-exact} is divided into two subcases, dealt with in \Cref{lem:2-exact-1,lem:2-exact-2} respectively.

\begin{lem}\label{lem:2-exact-1}
  Let $M$ be a pfd persistence module over $P$ which is $2$-exact. Assume further that $\KerDir{M}{i} \cap \ImDir{M}{i} \neq 0$ for some $i\in\{1,\ldots,n\}$.
  Then $M$ has an induced block summand.
\end{lem}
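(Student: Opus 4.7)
The strategy is to exhibit an induced $n$-block $B$ of shape $J_i\times\prod_{k\neq i}T_k$, with $J_i\subsetneq T_i$ a proper interval admitting an upper bound in $T_i\setminus J_i$, together with a monomorphism $\field_B\hookrightarrow M$ realising $\field_B$ as a direct summand of $M$. That $B$ is indeed an induced $n$-block can be checked by iterating the inductive clause of \Cref{def:block}, grouping one full axis $T_{k_0}$ at a time and reducing to the base case of a $2$-band $J_i\times T_{k_{n-2}}$.

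To construct the embedding, I first pick a non-zero $m\in(\KerDir{M}{i}\cap\ImDir{M}{i})(p)$. Using pointwise finite-dimensionality together with $m\in\ImDir{M}{i}$, I obtain a coherent family of preimages $(m_t)$ of $m$ along the axis-$i$ line through $p$; by $m\in\KerDir{M}{i}$, this family vanishes at some threshold $r_i\in T_i$, and $J_i$ is taken to be the non-empty proper interval of $T_i$ on which $m_t\neq 0$, which admits $r_i\in T_i\setminus J_i$ as an upper bound. This gives a monomorphism of interval modules along the axis-$i$ line, with support $J_i$.

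Next, I would extend this line monomorphism into the desired $\field_B\hookrightarrow M$ by iteratively invoking \Cref{lem:extension} (to extend the support $\{p_k\}$ downward to $p_k^-$) and its pointwise-dual upward analogue (to extend $\{p_k\}$ upward to $p_k^+$, thereby covering $T_k=p_k^-\cup p_k^+$), once for each axis $k\neq i$. The upward version is legitimate by applying the proof of \Cref{lem:extension} to the pointwise dual of $M$, which remains pfd and $k$-middle exact for all $2\leq k\leq n$. At each iteration step, the axis-$i$ component $J_i$ retains its upper bound $r_i$, the already-treated axes are full $T_k$, and the untreated axes carry single points admitting upper or lower bounds, so that the hypotheses of the extension lemmas persist throughout.

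Finally, to see that $\field_B\hookrightarrow M$ is a direct summand, I would first invoke \Cref{lem:injective-summand} on the death-block sub-support $\field_{J_i\times\prod_{k\neq i}p_k^-}$ to split it off as a direct summand of $M$, then use the $2$-left-exactness of $M$ on each $(i,k)$-cube to propagate the resulting retraction along each full axis $T_k$, extending the splitting consistently in both directions to cover all of $B$. The main obstacle is this last step: \Cref{lem:injective-summand} alone only produces death-block summands, so promoting this to an induced-block splitting requires a careful use of the full $2$-left-exactness of $M$ along each $T_k$ to extend the retraction coherently across $B$.
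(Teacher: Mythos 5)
Your approach is genuinely different from the paper's, and it has gaps that would need to be filled before it works. The paper's proof does not construct a line submodule by hand: instead, it locates a $2$-slice $S = T_1\times T_2\times\prod_{k\geq 3}\{t_k\}$ on which $N := \KerDir{M}{i}\cap\ImDir{M}{i}$ is non-zero and then invokes the already-established $2$-parameter block-decomposition theorem (\Cref{thm:2d-block-rect-decomp}) to extract a band summand $\field_{J_1\times T_2\times\prod_{k\geq 3}\{t_k\}}\hookrightarrow M_{|S}$ with $J_1$ a proper downward-closed interval. That $2$-dimensional theorem already packages the ``coherent family of preimages'' step that you are trying to redo from scratch; as written, your assertion that pfd-ness and $m\in\ImDir{M}{i}$ yield a \emph{coherent} family of preimages along the line is a non-trivial inverse-limit argument (the preimage sets form a tower of non-empty finite-dimensional affine subspaces) and it is not spelled out.

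The more serious problem is the claimed ``upward dual analogue'' of \Cref{lem:extension}. The pointwise dual of $M$ is a pfd, $k$-middle-exact module over $P^{\op}$, so \Cref{lem:extension} does apply to it — but duality exchanges monomorphisms and epimorphisms. Applying \Cref{lem:extension} to $M^*$ extends a monomorphism $\field_I\hookrightarrow M^*$, which corresponds upon dualizing back to an \emph{epimorphism} $M\twoheadrightarrow\field_{\bar I}$, not to a submodule inclusion. So the dualization route cannot produce the upward monomorphic extension you want. The paper achieves that step by a direct use of $2$-left exactness: because the element $m_p$ is killed when pushed past $\sup J_i$ along the $i$-th axis, the injectivity of $M_\emptyset\to M_1\oplus M_2$ on any $2$-cube spanning axes $i$ and $k$ forbids $m_p$ from also being killed along axis $k$, so the submodule extends monomorphically all the way up $T_k$. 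Finally, your closing worry about \Cref{lem:injective-summand} ``only producing death-block summands'' is a terminological red herring: the lemma is the injective-summand statement from Botnan--Crawley-Boevey and applies to any $\field_B$ with $B$ downward-closed, and $B=J_i\times\prod_{k\neq i}T_k$ \emph{is} downward-closed whenever $J_i$ is. No retraction-propagation step is needed; the paper applies \Cref{lem:injective-summand} directly at this point, exactly as you should.
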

\begin{proof}
    If $N:= \KerDir{M}{i} \cap \ImDir{M}{i} \neq 0$ for some $i\in\{1,\ldots,n\}$, (say $i=1$ the other cases being symmetric), then there is another index $j\in\{1,\ldots,n\}\setminus\{i\}$ (say $j=2$ the other cases being symmetric) and elements $t_k \in T_k$ for $k \geq 3$ such that the restriction of $N$ to the $2$-slice $S = T_1 \times T_2 \times \prod_{k\geq 3}  \lbrace t_k \rbrace$ is non-zero. By the $2$-parameter block-decomposition result (\Cref{thm:2d-block-rect-decomp}), then the restriction of $M$ to $S$ has a death block summand isomorphic to $\field_B$ where $B = J_1\times T_2 \times \prod_{k\geq 3}  \lbrace t_k \rbrace$ for some downward-closed interval $J_1\subset T_1$. This yields a monomorphism $\field_B \hookrightarrow M_S$. By \Cref{lem:extension}, this can be extended in decreasing $T_3$-direction to get a monomorphism:
    \begin{equation*}
      \field_{J_1\times T_2 \times t_3^- \times \prod_{k\geq 4}  \lbrace t_k \rbrace} \hookrightarrow M_{T_1 \times T_2 \times t_3^- \times \prod_{k\geq 4}  \lbrace t_k \rbrace}.
    \end{equation*}
    By $2$-left exactness of $M$, one can extend this submodule injectively in increasing $T_3$-direction to get a monomorphism:
    \begin{equation*}
      \field_{J_1\times T_2 \times T_3 \times \prod_{k\geq 4}  \lbrace t_k \rbrace} \hookrightarrow M_{T_1 \times T_2 \times T_3 \times \prod_{k\geq 4}  \lbrace t_k \rbrace}.
    \end{equation*}
    Proceeding similarly on each remaining axis $T_k$ for $4\leq k \leq n$, we get a submodule:
    \begin{equation*}
      \field_{J_1\times T_2\times \ldots\times T_k} \hookrightarrow M,
    \end{equation*}
    which is death block submodule and therefore a direct summand by \Cref{lem:injective-summand}.
\end{proof}

\begin{lem}\label{lem:2-exact-2}
  Let $M$ be a pfd persistence module over $P$ which is $2$-exact. Assume further that $\KerDir{M}{i} \cap \ImDir{M}{i} = 0$ for all $i\in\{1,\ldots,n\}$.
  Then $M$ has an induced block summand.
\end{lem}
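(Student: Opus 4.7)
My plan adapts the strategy of \Cref{lem:2-exact-1}, replacing the role of death blocks by induced blocks whose underlying lower-dimensional block has a ``birth'' or ``band'' character (which, under the hypothesis, are the only admissible shapes in every low-dimensional slice). The overall structure is fourfold: localize to a $2$-slice, identify a suitable $2$-dimensional summand using the propagated hypothesis, extend to an $n$-dimensional induced block monomorphism, and split it off.

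First, I pick a $2$-slice $S = T_i \times T_j \times \{c_k\}_{k \neq i,j}$ on which $M|_S \neq 0$. By \Cref{thm:BCB}, $M|_S$ decomposes as a direct sum of $2$-block modules. A key observation is that no $2$-dimensional death block can appear as a summand: any such summand $\field_{J_i \times J_j}$ with both $J_i,J_j$ downward-closed proper would supply, at a point in its support, a non-zero element lying in both $\KerDir{M}{i}$ and $\ImDir{M}{i}$ (since slice-restriction preserves both eventual vanishing along an axis and the preimage property along it), contradicting the hypothesis on $M$. Hence the summands of $M|_S$ are $2$-dimensional birth blocks or bands $\field_{J \times T_j}$ or $\field_{T_i \times J}$.

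Starting from a suitable $2$-dimensional summand, I extend the monomorphism axis by axis. \Cref{lem:extension} provides downward extensions in each axis $k \neq i,j$, and the $k$-left exactness of $M$ (which holds for all $k$ by \Cref{lem:exactInduction} since $2$-exactness implies $m$-exactness for all $m \geq 2$) provides upward extensions. This yields a monomorphism $\field_I \hookrightarrow M$, with $I$ an induced $n$-block whose birth/full/death profile in each axis is determined by the local structure of $M$. The case where \emph{every} $2$-slice admits only birth block summands should be handled separately by iterating over pairs of axes and directly extracting an $n$-birth block summand.

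The main obstacle is the final step: showing that $\field_I \hookrightarrow M$ is a direct summand. Unlike death blocks, induced blocks with birth or full-axis components are not injective in $\Mod\field P$, so \Cref{lem:injective-summand} does not apply. My plan is to reduce the splitting to the finite-cube case handled by \Cref{pro:discrete2ex} via a discretisation in the spirit of \Cref{lem:discretisation}: construct a finite subposet $P' \subset P$ and a poset morphism $\pi : P \to P'$ under which $M$ (restricted to a relevant region) pulls back from $M|_{P'}$, apply \Cref{pro:discrete2ex} to block-decompose $M|_{P'}$ and split off the block corresponding to $I$, then transport the splitting back to $M$ using the same extension tools. The challenge is that \Cref{lem:discretisation} requires surjective structure maps, which $M$ may not have; I expect to circumvent this either by replacing $M$ with a suitable subquotient (e.g.\ built from the $\ImDir{M}{i}$'s, whose non-trivial intersection with $\KerDir{M}{i}$ is ruled out by hypothesis) on which surjectivity along each axis can be established, or by assembling the retraction directly from pointwise retractions using iterated applications of \Cref{lem:extension} in the reverse direction and the $k$-right exactness of $M$.
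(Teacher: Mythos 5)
Your plan takes a genuinely different route from the paper's, but the obstacle you identify at the end — splitting off $\field_I$ when $I$ is not a death block — is a real gap, and your proposed workarounds do not close it. The discretisation idea cannot be applied as stated: \Cref{lem:discretisation} requires all structure maps of the module to be surjective, which fails for $M$ and its slice restrictions in general. Passing to subquotients built from the $\ImDir{M}{i}$'s to regain surjectivity amounts to working with exactly the intersections $\bigcap_{i\in\JJ}\ImDir{M}{i}$ that the paper's proof is organized around, so you would effectively be re-deriving its case analysis rather than avoiding it. The alternative of ``assembling the retraction directly from pointwise retractions'' is not concrete enough to assess. There is also a problem one step earlier: \Cref{lem:extension} assumes each fixed interval $J_i$ is either the full axis or admits an upper bound in $T_i\setminus J_i$. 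A $2$-dimensional birth block has upward-closed proper components, which violate this hypothesis, and a band whose interval component is unbounded above does too; so the ``downward extension'' you invoke does not apply to the very summands your no-death-block observation leaves you with, and you would need at minimum a dual of \Cref{lem:extension}. Finally, the sub-case ``every $2$-slice has only birth block summands'' is precisely where the splitting difficulty is sharpest, and it is dismissed in one sentence without a mechanism.

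For contrast, the paper's proof proceeds by induction on $n$ and never tries to extract an induced-block summand from a slice and then split it. It works globally with the $\ImDir{M}{i}$: $2$-right exactness gives $M=\ImDir{M}{i}+\ImDir{M}{j}$ for all $i\neq j$, and the argument branches on which intersections $\bigcap_{i\in\JJ}\ImDir{M}{i}$ vanish. If $\bigcap_{i=1}^n\ImDir{M}{i}\neq 0$ one gets a $\field_P$ summand directly; if some $\ImDir{M}{i}\cap\ImDir{M}{j}=0$ then $\ImDir{M}{i}$ is a summand whose structure maps along $T_i$ are isomorphisms (this is exactly where $\KerDir{M}{i}\cap\ImDir{M}{i}=0$ is used), so the induction hypothesis applies to an $(n-1)$-slice; the intermediate case is handled via the $n$-cube decomposition (\Cref{pro:discrete2ex}, \Cref{rk:discrete2ex}), which shows $M=\sum_{|\JJ|=k}\bigcap_{i\in\JJ}\ImDir{M}{i}$ is a direct sum, and again each piece is an extension-by-isomorphisms of an $(n-k)$-slice. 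The point is that the splitting is never done by lifting an injective/projective argument to band-shaped modules; it comes for free from a direct-sum decomposition of $M$ built out of the $\ImDir{M}{i}$.
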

\begin{proof}
    We proceed by induction on~$n\geq 2$. If~$n=2$, we already know that~$M$ is block-decomposable (\Cref{thm:2d-block-rect-decomp}). That a $2$-exact 2-parameter persistence module must have only induced block summands is an easy exercise. Suppose now~$n\geq 3$.
    
    It follows from~$2$-right exactness that~$M = \ImDir{M}{i} + \ImDir{M}{j}$ for all~$i \neq j$.  First, assume that~$\ImDir{M}{i} \cap \ImDir{M}{j} = 0$ for some~$i  \neq j$. Consider a summand~$N := \ImDir{M}{i}$ which is non-zero. By the assumption of the lemma, all structure maps of~$N$ that are parallel to the~$T_i$ axis are isomorphisms.
    The restriction of~$N$ to any~$(n-1)$-slice parallel to the~$T_i$-axis has an induced block summand by the induction hypothesis, hence~$N$ itself has an induced block summand, and so does~$M$.

    Assume now that $N':=\bigcap_{i=1}^n \ImDir{M}{i} \neq 0$. All structure maps of $N'$ are surjective and all its structure spaces are non-zero, hence this submodule of $M$ has a direct summand isomorphic to $\field_P$ by \cite[Lemma 2.3]{Botnan2018a}. By \Cref{lem:injective-summand}, the submodule~$\field_P$ of $M$ is a direct summand.

    Finally, assume that there exists $2\leq k\leq n-1$ such that all $(k+1)$-fold intersections of the submodules $\ImDir{M}{i}$ for $i\in\{1,\ldots,n\}$ vanish, but that at least one $k$-fold intersection is non-zero. From this property and the block-decomposition result on an $n$-cube (\Cref{pro:discrete2ex}, \Cref{rk:discrete2ex}), it follows that the restriction of $M$ to any $n$-cube $C$ of $P$ has a block decomposition in which only block modules supported on $k$-cubes adjacent to the maximum of $C$ appear. As a consequence:
    \[
      M = \sum_{\substack{\JJ \subseteq \{1,\ldots,n\} \\ |\JJ| = k}} \quad\bigcap_{i \in \JJ} \ImDir{M}{i},
    \]
    and this sum is in fact a direct sum by our assumption on $(k+1)$-folds intersections of the submodules $\ImDir{M}{i}$. Consider then a non-zero summand $N'' =  \bigcap_{i \in \JJ} \ImDir{M}{i}$ for some $\JJ\subset \{1,\ldots,n\}$ with $|\JJ |= k$. The structure maps of $N''$ which are parallel to the $T_j$-axis for $j\in\{1,\ldots,n\}\setminus\JJ$ are all isomorphisms by the assumptions of the lemma. Hence $N''$ is simply the extension by isomorphisms of any one of its restriction to $(n-k)$-slices with constant parameters along the axes $T_j$ for $j\in\{1,\ldots,n\}\setminus\JJ$. But these restrictions to $(n-k)$-slices have a block summand by our induction hypothesis. Hence so does $N''$, and so does $M$.
\end{proof}

\bibliographystyle{plain}
\bibliography{Literature}

\begin{thebibliography}{10}

\bibitem{asashiba2023relative}
Hideto Asashiba.
\newblock Relative koszul coresolutions and relative betti numbers.
\newblock {\em arXiv preprint arXiv:2307.06559}, 2023.

\bibitem{asashiba2023approximation}
Hideto Asashiba, Emerson~G Escolar, Ken Nakashima, and Michio Yoshiwaki.
\newblock Approximation by interval-decomposables and interval resolutions of
  persistence modules.
\newblock {\em Journal of Pure and Applied Algebra}, 227(10):107397, 2023.

\bibitem{blanchette2021homological}
Benjamin Blanchette, Thomas Br{\"u}stle, and Eric~J Hanson.
\newblock Homological approximations in persistence theory.
\newblock {\em Canadian Journal of Mathematics}, pages 1--38, 2021.

\bibitem{BorceuxI}
Francis Borceux.
\newblock {\em Handbook of Categorical Algebra. 1}, volume~50 of {\em
  Encyclopedia of Mathematics and its Applications}.
\newblock Cambridge University Press, Cambridge, 1994.
\newblock Basic category theory.

\bibitem{botnan2023local}
Magnus~B Botnan, Vadim Lebovici, and Steve Oudot.
\newblock Local characterizations for decomposability of 2-parameter
  persistence modules.
\newblock {\em Algebras and Representation Theory}, pages 1--44, 2023.

\bibitem{Botnan2018a}
Magnus~Bakke Botnan and William Crawley-Boevey.
\newblock Decomposition of persistence modules.
\newblock {\em Proc. Amer. Math. Soc.}, 148(11):4581--4596, 2020.

\bibitem{botnan2022rectangle}
Magnus~Bakke Botnan, Vadim Lebovici, and Steve Oudot.
\newblock On rectangle-decomposable 2-parameter persistence modules.
\newblock {\em Discrete \& Computational Geometry}, 68(4):1078--1101, 2022.

\bibitem{Botnan2018}
Magnus~Bakke Botnan and Michael Lesnick.
\newblock Algebraic stability of zigzag persistence modules.
\newblock {\em Algebr. Geom. Topol.}, 18(6):3133--3204, 2018.

\bibitem{botnan2021signed}
Magnus~Bakke Botnan, Steffen Oppermann, and Steve Oudot.
\newblock Signed barcodes for multi-parameter persistence via rank
  decompositions and rank-exact resolutions.
\newblock {\em arXiv preprint arXiv:2107.06800}, 2021.

\bibitem{carlsson2019parametrized}
Gunnar Carlsson, VIN De~Silva, Sara Kali{\v{s}}nik, and Dmitriy Morozov.
\newblock Parametrized homology via zigzag persistence.
\newblock {\em Algebraic \& Geometric Topology}, 19(2):657--700, 2019.

\bibitem{CdM09}
Gunnar Carlsson, Vin de~Silva, and Dmitriy Morozov.
\newblock Zigzag persistent homology and real-valued functions.
\newblock In {\em Proceedings of the Twenty-Fifth Annual Symposium on
  Computational Geometry}, SCG '09, page 247–256, New York, NY, USA, 2009.
  Association for Computing Machinery.

\bibitem{Carlsson2009}
Gunnar Carlsson and Afra Zomorodian.
\newblock The theory of multidimensional persistence.
\newblock {\em Discrete Comput. Geom.}, 42(1):71--93, 2009.

\bibitem{chacholski2022effective}
Wojciech Chacholski, Andrea Guidolin, Isaac Ren, Martina Scolamiero, and
  Francesca Tombari.
\newblock Koszul complexes and relative homological algebra of functors over
  posets.
\newblock {\em arXiv preprint arXiv:2209.05923}, 2022.

\bibitem{chacholski2024realisationsposetstameness}
Wojciech Chacholski, Alvin Jin, and Francesca Tombari.
\newblock Realisations of posets and tameness.
\newblock {\em arXiv preprint arXiv:2112.12209}, 2024.

\bibitem{Cochoy2020}
J\'{e}r\'{e}my Cochoy and Steve Oudot.
\newblock Decomposition of {E}xact pfd {P}ersistence {B}imodules.
\newblock {\em Discrete Comput. Geom.}, 63(2):255--293, 2020.

\bibitem{dey2022generalized}
Tamal~K Dey and Cheng Xin.
\newblock Generalized persistence algorithm for decomposing multiparameter
  persistence modules.
\newblock {\em Journal of Applied and Computational Topology}, 6(3):271--322,
  2022.

\bibitem{Eisenbud1995}
David Eisenbud.
\newblock {\em Commutative algebra}, volume 150 of {\em Graduate Texts in
  Mathematics}.
\newblock Springer-Verlag, New York, 1995.
\newblock With a view toward algebraic geometry.

\bibitem{kerber2021fast}
Michael Kerber and Alexander Rolle.
\newblock Fast minimal presentations of bi-graded persistence modules.
\newblock In {\em 2021 Proceedings of the Workshop on Algorithm Engineering and
  Experiments (ALENEX)}, pages 207--220. SIAM, 2021.

\bibitem{lesnick2022computing}
Michael Lesnick and Matthew Wright.
\newblock Computing minimal presentations and bigraded betti numbers of
  2-parameter persistent homology.
\newblock {\em SIAM Journal on Applied Algebra and Geometry}, 6(2):267--298,
  2022.

\bibitem{Maclane}
Saunders Mac~Lane.
\newblock {\em Categories for the working mathematician}, volume~5 of {\em
  Graduate Texts in Mathematics}.
\newblock Springer-Verlag, New York, second edition, 1998.

\bibitem{miller2020homological}
Ezra Miller.
\newblock Homological algebra of modules over posets.
\newblock {\em arXiv preprint arXiv:2008.00063}, 2020.

\bibitem{Miller2005}
Ezra Miller and Bernd Sturmfels.
\newblock {\em Combinatorial commutative algebra}, volume 227 of {\em Graduate
  Texts in Mathematics}.
\newblock Springer-Verlag, New York, 2005.

\bibitem{oudot2023stability}
Steve Oudot and Luis Scoccola.
\newblock On the stability of multigraded betti numbers and hilbert functions.
\newblock {\em SIAM Journal on Applied Algebra and Geometry}, 2023.
\newblock To appear.

\end{thebibliography}
\end{document}